\author{Gregor Kemper \\
  \normalsize Technische Universit\"at M\"unchen, Zentrum Mathematik - M11 \\
  \normalsize  Boltzmannstr. 3, 85\,748 Garching, Germany \\
   \normalsize {\tt kemper$@$ma.tum.de}}
\title{The Transcendence Degree over a Ring}
\date{September 6, 2011}
\begin{document}

\maketitle

\begin{abstract}
  For a finitely generated algebra over a field, the transcendence
  degree is known to be equal to the Krull dimension. The aim of this
  paper is to generalize this result to algebras over rings. A new
  definition of the transcendence degree of an algebra $A$ over a ring
  $R$ is given by calling elements of $A$ algebraically dependent if
  they satisfy an algebraic equation over $R$ whose trailing
  coefficient, with respect to some monomial ordering, is~$1$. The
  main result is that for a finitely generated algebra over a
  Noetherian Jacobson ring, the transcendence degree is equal to the
  Krull dimension.
\end{abstract}

\section*{Introduction}

The equality of Krull dimension and transcendence degree for a
finitely generated algebra over a field is one of the fundamental
results in commutative algebra. Various extensions of this result have
appeared in the literature. \mycite{Onoda:Yoshida} generalized the
result to subalgebras of a finitely generated algebra over a field.
\mycite{Tanimoto:03} showed that for a finitely generated domain $A$
over a field and a prime ideal $P \in \Spec(A)$, the local ring $A_P$
has a subfield $L$ such that the transcendence degree of $A_P$ over
$L$ equals $\dim(A_P)$. Some authors, among them \mycite{Giral:81} and
\mycite{Hamann:86}, proposed several notions of a transcendence degree
of an algebra over a ring and studied their behavior. In the
introduction, \citename{Hamann:86} wrote that her paper might better
be titled: ``Why there is no notion of transcendence degree over
arbitrary commutative rings.''

This paper aims to take up the challenge posed by this comment. We
give a new definition of the transcendence degree of an algebra $A$
over a ring $R$ by calling elements $a_1 \upto a_n \in A$
algebraically dependent if they satisfy an equation $f(a_1 \upto a_n)
= 0$, where~$f$ a polynomial with coefficients in $R$ such that the
trailing coefficient of~$f$, with respect to some monomial ordering,
is~$1$ (see \dref{dTrdeg}). If $R$ is a field, this definition
coincides with the usual one. But in other cases, the new
transcendence degree behaves in unexpected ways. For example, the
transcendence degree of a ring over itself is ``usually'' not zero. As
the main result, we prove that if $A$ is finitely generated and $R$ is
a Noetherian Jacobson ring, then the Krull dimension of $A$ is equal
to the transcendence degree of $A$ over $R$. In fact, this result
extends to the case that $A$ is contained in a finitely generated
$R$-algebra, and if $A = R$, the hypothesis that $R$ is a Jacobson
ring can be dropped (see \tref{tMain}). In the case $A = R$ the result
was already proved for the lexicographic monomial ordering by
\mycite{Coquand.Lombardi} (see \tref{tCL}).

The paper is organized as follows. The first section contains the new
definition of the transcendence degree, some examples, and the
statement of the main result (\tref{tMain}). This is proved in the
second section. We also show that the validity of \tref{tMain}
characterizes Jacobson rings (see \rref{rConverse}). The last section
is devoted to some applications and to the question whether the
transcendence degree depends of the choice of a monomial ordering. We
conjecture that is does not (Conjecture~\ref{Conjecture}), and prove
some special cases (see \tref{tOrdering}).

This work was inspired by reading the above-mentioned paper of
\mycite{Coquand.Lombardi}, who characterized the Krull dimension by
certain types of identities. Interpreting this in terms of the
lexicographic monomial ordering led to the new definition of the
transcendence degree and prompted the questions to what extent this
depends on the choice of the monomial ordering, and whether one can
also prove a ``relative'' version involving the transcendence degree
over a subring. In his bachelor thesis~[\citenumber{Baerligea}],
Christoph \citename{Baerligea} studied (among other things) the first
question and found no example where the transcendence degree depends
on the monomial ordering. I wish to thank Peter Heinig for bringing
\citename{Coquand.Lombardi}'s article to my attention.



\section{A new definition of the transcendence degree} \label{sDefi}

All rings in this paper are assumed to be commutative with an identity
element~$1$. If $R$ is a ring, an $R$-algebra is a ring $A$ together
with a ring homomorphism $R \to A$. We call an $R$-algebra
\df{subfinite} if it is a subalgebra of a finitely generated
$R$-algebra. By $\dim(R)$ we will always mean the Krull dimension of
$R$. We follow the convention that the zero ring $R = \{0\}$ has Krull
dimension~$-1$. It will be convenient to work with the polynomial ring
$R[x_1,x_2, \ldots]$ with infinitely many indeterminates over a ring
$R$, and to understand a \df{monomial ordering} as a total
ordering~``$\preceq$'' on the set of monomials of $R[x_1,x_2, \ldots]$
such that the conditions $1 \preceq s$ and $s t_1 \preceq s t_2$ hold
for all monomials $s,t_1$, and~$t_2$ with $t_1 \preceq t_2$. Clearly
any monomial ordering on a polynomial ring $R[x_1 \upto x_n]$ with
finitely many indeterminates can be extended to a monomial ordering in
the above sense.

The following notions of algebraic dependence and transcendence degree
over a ring generalize the corresponding notions over a field.

\begin{defi} \label{dTrdeg}
  Let $R$ be a ring.
  \begin{enumerate}
  \item Let~``$\preceq$'' be a monomial ordering. A nonzero polynomial
    $f \in R[x_1,x_2,\ldots]$ is called \df{submonic} with respect
    to~``$\preceq$'' if its trailing coefficient (i.e., the
    coefficient of the least monomial having nonzero coefficient)
    is~$1$.
    
    A polynomial is called \df{submonic} if there exists a monomial
    ordering with respect to which it is submonic.
  \item Let $A$ be an $R$-algebra. Elements $a_1 \upto a_n \in A$ are
    called \df{algebraically dependent} over $R$ if there exists a
    submonic polynomial $f \in R[x_1 \upto x_n]$ such that $f(a_1
    \upto a_n) = 0$. (Of course the homomorphism $R \to A$ is applied
    to the coefficients of~$f$ before evaluating at $a_1 \upto a_n$.)
    Otherwise, $a_1 \upto a_n$ are called \df{algebraically
      independent} over $R$.
    
    We can also restrict~$f$ to be submonic with respect to a
    specified monomial ordering~''$\preceq$'', in which case we speak
    of algebraic dependence or independence with respect
    to~''$\preceq$''.
  \item For an $R$-algebra $A$, the \df{transcendence degree} of $A$
    over $R$ is defined as
    \begin{multline*}
      \trdeg(A:R) := \\
      \sup\left\{\strut[3mm] n \in \NN \mid \text{there exist} \ a_1
        \upto a_n \in A \ \text{that are algebraically independent
          over R}\right\}.
    \end{multline*}
    If every $a \in A$ is algebraically dependent over $R$, we set
    $\trdeg(A:R) := 0$ in the case $A \ne \{0\}$ and $\trdeg(A:R) :=
    -1$ in the case $A = \{0\}$. We write $\trdeg(R) := \trdeg(R:R)$
    for the transcendence degree of $R$ over itself.
    
    If~''$\preceq$'' is a monomial ordering, we define
    $\trdeg_\preceq(A:R)$ by requiring algebraic independence with
    respect to~''$\preceq$''.
  \end{enumerate}
\end{defi}

\begin{ex} \label{exTrdeg}
  \begin{enumerate}
    \renewcommand{\theenumi}{\arabic{enumi}}
  \item \label{exTrdeg1} If $R$ is an integral domain, then the
    elements of $R$ that are algebraically dependent over $R$ are~$0$
    and the units of $R$.
  \item \label{exTrdeg2} If $R$ is a nonzero finite ring, then
    $\trdeg(R) = 0$ since for each $a \in R$ there exist nonnegative
    integers $m < n$ such that $a^m = a^n$.
  \item \label{exTrdeg3} The following example shows that the notion
    of algebraic dependence with respect to a monomial ordering
    depends on the chosen monomial ordering. Let $R = K[t_1,t_2]$ be a
    polynomial ring in two indeterminates and let $a = t_1$ and $b =
    t_1 t_2$. The relation $b - t_2 a = 0$ tells us that $a,b$ are
    algebraically dependent over $R$ with respect to the lexicographic
    monomial ordering with $x_1 > x_2$. On the other hand, algebraic
    dependence over $R$ with respect to the lexicographic ordering
    with $x_2 > x_1$ would mean that there exist $i,j \in \NN_0$ such
    that $a^i b^j = t_1^{i+j} t_2 ^j $ lies in the $R$-ideal
    \[
    \left(b^{j+1},a^{i+1} b^j\right)_R = \left((t_1
      t_2)^{j+1},t_1^{i+j+1} t_2^j\right)_R,
    \]
    which is not the case.
    
  \item \label{exTrdeg4} We consider $R = \ZZ$ and claim that
    $\trdeg(\ZZ) = 1$. Since $\ZZ$ has nonzero elements which are not
    units, we have $\trdeg(\ZZ) \ge 1$. We need to show that all pairs
    of integers $a,b \in \ZZ$ are algebraically dependent over $\ZZ$.
    We may assume~$a$ and~$b$ to be nonzero and write
    \[
    a = \pm \prod_{i=1}^r p_i^{d_i} \quad \text{and} \quad b = \pm
    \prod_{i=1}^r p_i^{e_i},
    \]
    where the $p_i$ are pairwise distinct prime numbers and $d_i,e_i
    \in \NN_0$. Choose $n \in \NN_0$ such that $n \ge d_i/e_i$ for
    all~$i$ with $e_i > 0$. Then
    \[
    \gcd(a,b^{n+1}) = \prod_{i=1}^r p_i^{\min\{d_i,(n+1) e_i\}} \quad
    \text{divides} \quad \prod_{i=1}^r p_i^{n e_i} = b^n,
    \]
    so there exist $c,d \in \ZZ$ such that $b^n = c a + d b^{n+1}$.
    Since $f = x_2^n - c x_1 - d x_2^{n+1}$ is submonic (with respect
    to the lexicographic ordering with $x_1 > x_2$), this shows that
    $a,b$ are algebraically dependent.
    
    Clearly this argument carries over to any principal ideal domain
    that is not a field. It is remarkable that although the
    transcendence degree is an algebraic invariant, the above
    calculation has a distinctly arithmetic flavor. \exend
  \end{enumerate}
  \renewcommand{\exend}{}
\end{ex}

It becomes clear from \exref{exTrdeg}\eqref{exTrdeg1} that sums of
algebraic elements need not be algebraic, and from~\eqref{exTrdeg4}
that the transcendence degree does not behave additively for towers of
ring extensions.

\mycite{Coquand.Lombardi} proved that for a ring $R$ and an integer $n
\in \NN$, the inequality $\dim(R) < n$ holds if and only if for all
$a_1 \upto a_n \in R$ there exist $m_1 \upto m_n \in \NN_0$ such that
\begin{equation} \label{eqCL}
  \prod_{i=1}^n a_i^{m_i} \in \left.\left(\strut a_j \cdot
      \smash{\prod_{i=1}^j} a_i^{m_i} \right| j = 1 \upto n\right)_R
\end{equation}
(also see \mycite[Exercise~6.8]{Kemper.Comalg}). Using \dref{dTrdeg}
and writing $\lex$ for the lexicographic ordering with $x_i > x_{i+1}$
for all~$i$, we can reformulate this result as follows.

\begin{theorem}[\mycite{Coquand.Lombardi}] \label{tCL}
  If $R$ is a ring, then
  \[
  \trdeg_{\lex}(R) = \dim(R).
  \]
\end{theorem}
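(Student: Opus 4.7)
The strategy is to recast algebraic dependence with respect to $\lex$ as the Coquand--Lombardi identity~\eqref{eqCL} and then invoke their cited result. Concretely, I would prove: for $a_1 \upto a_n \in R$, the elements are algebraically dependent over $R$ with respect to $\lex$ if and only if there exist $m_1 \upto m_n \in \NN_0$ such that
\[
\prod_{i=1}^n a_i^{m_i} \in \left( a_j \cdot \prod_{i=1}^j a_i^{m_i} \mid j = 1 \upto n \right)_R.
\]
Once this equivalence is in hand, \mycite{Coquand.Lombardi}'s theorem translates verbatim into ``$\trdeg_\lex(R) < n$ if and only if $\dim(R) < n$'', which yields $\trdeg_\lex(R) = \dim(R)$.

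The bridge between the two formulations is a single combinatorial observation about lex order: a monomial $x_1^{\alpha_1}\cdots x_n^{\alpha_n}$ strictly exceeds $x_1^{m_1}\cdots x_n^{m_n}$ in $\lex$ precisely when there is an index $j$ with $\alpha_i = m_i$ for $i < j$ and $\alpha_j \geq m_j + 1$; equivalently, $x_1^{\alpha_1}\cdots x_n^{\alpha_n}$ is then divisible by $x_j \prod_{i=1}^j x_i^{m_i}$, which is exactly the shape of the generators appearing on the right-hand side of \eqref{eqCL}.

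For the forward direction, given a submonic $f \in R[x_1 \upto x_n]$ with $f(a_1 \upto a_n) = 0$ and trailing monomial $x_1^{m_1}\cdots x_n^{m_n}$, I would isolate this term: $f(a) = 0$ expresses $\prod_i a_i^{m_i}$ as an $R$-linear combination of higher-order evaluations $a^\alpha$, each of which lies in the claimed ideal by the observation. Conversely, from a relation $\prod_i a_i^{m_i} = \sum_{j=1}^n b_j\, a_j \prod_{i=1}^j a_i^{m_i}$ with $b_j \in R$, I would form $f := \prod_i x_i^{m_i} - \sum_j b_j\, x_j \prod_{i=1}^j x_i^{m_i}$; this vanishes at $(a_1 \upto a_n)$, and applying the observation in reverse confirms that its trailing monomial is $\prod_i x_i^{m_i}$ with coefficient $1$, so $f$ is submonic with respect to $\lex$.

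I do not anticipate a genuine obstacle here: the combinatorial observation is essentially the definition of lex order, so the proof amounts to a dictionary between two equivalent formalisms, with Coquand--Lombardi's theorem supplying all the nontrivial content.
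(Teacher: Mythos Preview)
Your proposal is correct and matches the paper's treatment: the paper presents \tref{tCL} precisely as a reformulation of Coquand--Lombardi's characterization~\eqref{eqCL}, leaving the translation implicit, while you have spelled out the dictionary between lex-submonic relations and~\eqref{eqCL} explicitly. The combinatorial equivalence you identify---that a monomial is lex-greater than $x_1^{m_1}\cdots x_n^{m_n}$ if and only if it is divisible by some $x_j\prod_{i\le j}x_i^{m_i}$---is exactly the content of that reformulation.
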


The following is the main result of this paper.

\begin{theorem} \label{tMain}
  \begin{enumerate}
  \item \label{tMainA} If $R$ is a Noetherian ring, then
    \[
    \trdeg(R) = \dim(R).
    \]
  \item \label{tMainB} If $R$ is a Noetherian Jacobson ring and $A$ is
    a subfinite $R$-algebra, then
    \[
    \trdeg_{\lex}(A:R) = \dim(A).
    \]
  \item \label{tMainC} If $R$ and $A$ are as in~\eqref{tMainB} and $A$
    is Noetherian, then
    \[
    \trdeg(A:R) = \dim(A).
    \]
  \end{enumerate}
\end{theorem}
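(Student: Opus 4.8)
The plan is to prove the three parts in a logical order that lets later parts lean on earlier ones, with \tref{tCL} as the foundational tool. The key point to exploit throughout is that \tref{tCL} already identifies $\trdeg_{\lex}(R)$ with $\dim(R)$, so for part~\eqref{tMainA} it suffices to show that, for a Noetherian ring, allowing arbitrary monomial orderings does not enlarge the set of algebraically independent tuples beyond what $\lex$ already detects --- equivalently, $\trdeg(R) \le \dim(R)$, since the inequality $\trdeg(R) \ge \trdeg_{\lex}(R) = \dim(R)$ is automatic. So the real content of~\eqref{tMainA} is: if $n > \dim(R)$ and $a_1 \upto a_n \in R$, then $a_1 \upto a_n$ satisfy a submonic relation with respect to \emph{some} monomial ordering. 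I would try to produce such a relation directly from the Coquand--Lombardi membership~\eqref{eqCL}, which after reindexing the $a_i$ (Noetherianness enters to control the combinatorics of minimal primes / dimension along chains) yields a polynomial whose trailing term with respect to a suitable lex-type ordering is a pure power of one variable with coefficient~$1$.

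For part~\eqref{tMainB}, the inequality $\trdeg_{\lex}(A:R) \ge \dim(A)$ should follow by a going-up/lifting argument: a chain of primes in $A$ of length $\dim(A)$ can be used, together with \tref{tCL} applied inside appropriate residue rings, to exhibit $\dim(A)$ elements of $A$ that are $\lex$-algebraically independent over $R$; here one must check that a relation submonic over $R$ that becomes trivial is genuinely excluded, which is where the Jacobson hypothesis first bites (it guarantees enough closed points / maximal ideals to separate the chain). For the reverse inequality $\trdeg_{\lex}(A:R) \le \dim(A)$, I would reduce to the finitely generated case --- if $A \subseteq B$ with $B$ a finitely generated $R$-algebra, then algebraic dependence over $R$ of elements of $A$ is the same computed in $B$, and $\dim$ of the relevant quotient of $B$ dominates --- and then, for $B$ finitely generated over the Noetherian Jacobson ring $R$, invoke the classical dimension theory of Jacobson rings: $\dim(B) = \dim(R) + \operatorname{trdeg}$ of residue field extensions at maximal ideals, and any $n > \dim(B)$ elements must satisfy a relation that can be massaged into a $\lex$-submonic one. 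The crucial use of ``Jacobson'' is that $\dim(B/\mathfrak{m}) = \dim R + (\text{field transcendence degree})$ holds compatibly across maximal ideals, so the submonic relation produced locally can be globalized.

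Part~\eqref{tMainC} is then nearly formal: when $A$ is Noetherian we want to replace $\trdeg_{\lex}$ by $\trdeg$ (arbitrary orderings), and again $\trdeg(A:R) \ge \trdeg_{\lex}(A:R) = \dim(A)$ is free, so only $\trdeg(A:R) \le \dim(A)$ needs proof. This should follow by the same mechanism as in~\eqref{tMainA}: given $n > \dim(A)$ elements of $A$, apply part~\eqref{tMainB} inside $A$ itself (viewing $A$ as a subfinite algebra over... no --- rather, use that $A$ Noetherian lets us run the Coquand--Lombardi machinery relative to $R$ and extract an explicit submonic relation), the point being that a relation submonic with respect to \emph{some} ordering is at least as easy to produce as one submonic with respect to $\lex$.

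**Main obstacle.** I expect the hard part to be the reverse inequality in~\eqref{tMainB}: showing $\trdeg_{\lex}(A:R) \le \dim(A)$ for a subfinite algebra over a Noetherian Jacobson ring. The difficulty is twofold --- first, passing from the subalgebra $A$ to an ambient finitely generated $B$ without losing control of which relations are submonic over $R$; and second, turning the classical fibre-dimension formula for Jacobson rings into an \emph{effective} construction of a $\lex$-submonic polynomial in $n = \dim(A)+1$ given variables. The Jacobson hypothesis is exactly what makes the second step work, and pinning down where a non-Jacobson ring would break the argument is also what underlies the converse observation promised in \rref{rConverse}.
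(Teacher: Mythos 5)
Your outline is built on a reversed inequality, and this misplaces the difficulty in all three parts. Since a polynomial that is submonic with respect to $\lex$ is in particular submonic, dependence with respect to $\lex$ implies dependence in the unrestricted sense, so $\trdeg(A:R) \le \trdeg_{\lex}(A:R)$ --- not ``$\trdeg \ge \trdeg_{\lex}$'' as you assert. Consequently, in part~\eqref{tMainA} the inequality $\trdeg(R) \le \trdeg_{\lex}(R) = \dim(R)$ is the free direction (by \tref{tCL}), and the genuine content is the \emph{lower} bound $\dim(R) \le \trdeg(R)$: one must exhibit $\dim(R)$ elements that are algebraically independent with respect to \emph{every} monomial ordering simultaneously. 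Your plan of extracting from the Coquand--Lombardi membership \eqref{eqCL} a relation submonic with respect to some ordering only reproves the direction that is already free and gives nothing for the lower bound. The paper's proof of this bound is \tref{tLower}: if a prime of height $n$ lies minimally over $(a_1 \upto a_n)_R$ (after localizing, a system of parameters), then the $a_i$ are independent with respect to every ordering; the argument chooses positive weights compatible with the ordering on the finitely many monomials of a putative relation and derives a contradiction by comparing the growth of $\length(R/I_j)$ for the weighted ideals $I_j$ with the degree-$n$ Hilbert--Samuel growth. This is where the Noetherian hypothesis is used, and nothing of this kind appears in your proposal. The same reversal invalidates your treatment of part~\eqref{tMainC}: there the upper bound $\trdeg(A:R) \le \trdeg_{\lex}(A:R) \le \dim(A)$ is free from part~\eqref{tMainB}, and what needs the Noetherianness of $A$ is again the lower bound $\dim(A) \le \trdeg(A)$ via \tref{tLower} and \eqref{eqChain}.

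For part~\eqref{tMainB}, the lower bound $\dim(A) = \trdeg_{\lex}(A) \le \trdeg_{\lex}(A:R)$ is immediate from \tref{tCL} applied to $A$ itself; no going-up argument and no Jacobson hypothesis enter there, so your claim that Jacobson ``first bites'' in this direction is misplaced. The hard direction is $\trdeg_{\lex}(A:R) \le \dim(A)$, and your sketch has two concrete gaps. First, the reduction ``dependence of elements of $A$ is the same computed in $B$'' only yields a bound by $\dim(B)$, which can strictly exceed $\dim(A)$ (take $A = R \subseteq B = R[x]$); the paper instead reduces to $B$ a domain and invokes Giral's result that $A[a^{-1}]$ is finitely generated for some nonzero $a \in A$, so that \pref{pUpper} applies to $A[a^{-1}]$ and the bound $\dim(A[a^{-1}]) \le \dim(A)$ is preserved. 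Second, for $A$ finitely generated your appeal to a dimension formula for Jacobson rings plus ``massaging'' a relation into a $\lex$-submonic one is precisely the statement to be proved, not a proof; the paper's mechanism is an induction on $\dim(A)$ through the boundary localization $U^{-1}A$ with $U = \{f(a_n) \mid f \in R[x] \ \text{submonic}\}$, where \lref{lBoundary} (the boundary of an element of a Noetherian ring is Jacobson) keeps the induction hypothesis available and \lref{lField} (a Nullstellensatz-type statement: the kernel of $R \to A$ is maximal when $A$ is a finitely generated $R$-algebra and a field) is where the Jacobson hypothesis actually enters. Without these ingredients, and with the inequality direction corrected, your proposal does not yield any of the three parts.
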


Parts~\eqref{tMainB} and~\eqref{tMainC} generalize the classical
result that the Krull dimension of a finitely generated algebra over a
field is equal to its transcendence degree.

\section{Proof of the main result} \label{sProof}

The proof of \tref{tMain} is subdivided into various steps, which
contain results that are themselves of some interest.

Recall that in a Noetherian ring $R$ with $\dim(R) \ge n$ there exist
elements $a_1 \upto a_n$ and a prime ideal $P$ of height~$n$ that lies
minimally over $(a_1 \upto a_n)_R$ (see~[\citenumber{Kemper.Comalg},
Theorem~7.8]). Therefore the following theorem implies the inequality
\begin{equation} \label{eqLower}
  \dim(R) \le \trdeg(R).
\end{equation}

\begin{theorem} \label{tLower}
  Let $R$ be a Noetherian ring. If $a_1 \upto a_n \in R$ are elements
  such that $R$ has a prime ideal of height~$n$ lying minimally over
  $(a_1 \upto a_n)_R$, then the $a_i$ are algebraically independent
  over $R$.
\end{theorem}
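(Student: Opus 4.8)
The plan is to argue by contradiction: suppose $a_1 \upto a_n$ are algebraically dependent over $R$, so there is a submonic $f \in R[x_1 \upto x_n]$, submonic with respect to some monomial ordering ``$\preceq$'', with $f(a_1 \upto a_n) = 0$. Let $t = x_1^{e_1} \cdots x_n^{e_n}$ be the trailing (i.e. $\preceq$-least) monomial of $f$; by hypothesis its coefficient is $1$, so we may write $f = t + \sum_{s \succ t} c_s \, s$ with $c_s \in R$. Evaluating at the $a_i$ gives $a_1^{e_1} \cdots a_n^{e_n} = -\sum_{s \succ t} c_s \, s(a_1 \upto a_n)$. The key point is that every monomial $s$ appearing on the right is strictly $\succ t$, and I want to deduce from this that $a_1^{e_1} \cdots a_n^{e_n}$ lies in an ideal built from the $a_i$ that is ``smaller'' in a way that contradicts the minimality/height hypothesis on the prime ideal $P$.

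More precisely, I would pass to the localization $R_P$ at the prescribed prime $P$ of height $n$ lying minimally over $\mathfrak{a} := (a_1 \upto a_n)_R$; then $PR_P$ is a minimal prime over $\mathfrak{a}R_P$ in the Noetherian local ring $R_P$ of dimension $\ge n$, hence (Krull's height theorem, since $P$ has height exactly $n$) the images of $a_1 \upto a_n$ form a system of parameters for $R_P$, and in particular the associated graded ring $\mathrm{gr}_{\mathfrak{a}R_P}(R_P)$ — or rather the fiber over the exceptional divisor — has the property that $a_1 \upto a_n$ are ``analytically independent'' modulo $PR_P$. Concretely: because $P$ has height $n$ and lies minimally over $n$ elements, no monomial $a_1^{d_1}\cdots a_n^{d_n}$ of total degree $d$ can lie in the ideal generated by the monomials of total degree $d$ that are ``lexicographically larger'' — this is exactly the kind of statement that Coquand–Lombardi's identity \eqref{eqCL} (equivalently \tref{tCL}) is designed to detect, and I expect the contradiction to come from combining the relation $a_1^{e_1}\cdots a_n^{e_n} \in (s(a_1\upto a_n) \mid s \succ t)_R$ with a parameter-independence statement in $R_P$.

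The cleanest route is probably this: reduce mod a minimal prime of $R_P$ contained in $PR_P$ with $\dim = n$ (so we may assume $R$ is a Noetherian local domain of dimension $n$ with maximal ideal $P$, and $a_1\upto a_n$ a system of parameters); then the graded ring $\bigoplus_k \mathfrak{a}^k/\mathfrak{a}^{k+1}$ has the form $(R/\mathfrak{a})[y_1\upto y_n]/J$ where $J$ contains no element whose nonzero terms all have the same total degree $d$ with a ``minimal'' monomial appearing with a unit coefficient — because a system of parameters in a Noetherian local ring generates an ideal of multiplicity equal to its colength, forcing the leading form of any relation to be a zero-divisor, hence (in the domain/equidimensional case) the $y_i$ are algebraically independent over $R/\mathfrak{a}$. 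The relation $t = -\sum_{s\succ t} c_s s$ evaluated at the $a_i$, after extracting lowest-degree parts with respect to the $\mathfrak{a}$-adic filtration, would then yield a genuine homogeneous relation among the $\overline{y_i}$ with a submonic trailing term, contradicting this independence.

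The main obstacle I anticipate is the bookkeeping that ties the \emph{monomial ordering} ``$\preceq$'' on $R[x_1,x_2,\ldots]$ to the \emph{$\mathfrak{a}$-adic degree} filtration: a submonic $f$ need not be homogeneous, its trailing monomial $t$ need not have minimal total degree among the monomials of $f$, and a monomial $s \succ t$ may well have total degree \emph{smaller} than that of $t$. So I cannot naively take lowest-degree parts. The fix is to use $\preceq$ itself as the organizing filtration — i.e. to run the argument in the $R$-module filtered by the ideals $I_s := \big(s'(a_1\upto a_n) \mid s' \preceq s\big)_R$ and show, using that $a_1\upto a_n$ lie minimally in a height-$n$ prime, that $t(a_1\upto a_n) \notin I_t'$ where $I_t' := \big(s'(a_1\upto a_n) \mid s' \succ t\big)_R$; this last non-membership is precisely a Coquand–Lombardi-type statement at the prime $P$, and getting it to work for an \emph{arbitrary} monomial ordering (not just $\lex$) is where the real content, and the main difficulty, lies.
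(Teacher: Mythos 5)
Your setup matches the paper's: argue by contradiction, localize at the given height-$n$ prime $P$ so that (after replacing $R$ by $R_P$) the $a_i$ form a system of parameters of a Noetherian local ring, and try to play the submonic relation off against the degree-$n$ growth of $\length(R/\mathfrak{q}^j)$, $\mathfrak{q}=(a_1 \upto a_n)_R$. You also correctly diagnose the central obstacle: the trailing monomial of a submonic $f$ with respect to an arbitrary monomial ordering need not have minimal total degree, the monomials $s \succ t$ may have smaller total degree, so neither analytic independence of a system of parameters nor ``taking lowest-degree forms'' in $\mathrm{gr}_{\mathfrak{q}}(R_P)$ applies directly. (As an aside, your intermediate claim that a system of parameters has ``multiplicity equal to its colength'' is false in general --- equality requires Cohen--Macaulayness --- but you abandon that route anyway.)

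The genuine gap is that you stop exactly where the real work starts, and you say so yourself: your proposed fix is to filter by the ideals $I_s := \bigl(s'(a_1 \upto a_n) \mid s' \succeq s\bigr)_R$ and prove the non-membership $t(a_1\upto a_n) \notin \bigl(s'(a_1\upto a_n) \mid s' \succ t\bigr)_R$, but that non-membership \emph{is} the statement to be proved, and you give no mechanism for deriving a contradiction from the height hypothesis for an arbitrary ordering. The missing idea in the paper is a reduction from the arbitrary monomial ordering to integer weights: since only finitely many monomials $t \prec t_1 \upto t_m$ occur in the relation, one can choose positive integers $w_1 \upto w_n$ with $\sum_i w_i d_i < \sum_i w_i e_{k,i}$ for all $k$ (a separating-functional/convex-cone property of monomial orderings). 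This converts the ordering into an $\NN$-indexed filtration $I_j := \bigl(\prod_i a_i^{e_i} \mid \sum_i w_i e_i \ge j\bigr)_R$, for which two quantitative estimates collide: on the one hand $I_{\overline{w}j} \subseteq \mathfrak{q}^j$ forces $\length(R/I_{\overline{w}j})$ to grow like a degree-$n$ polynomial; on the other hand the relation lets one discard every generator of $I_j/I_{j+1}$ whose exponent vector dominates $(\underline{d})$, and a generating-function count then bounds $\length(R/I_j)$ by a polynomial of degree at most $n-1$. Without the weight-selection step and this length comparison (or some substitute for them), your outline does not yield a proof.
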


\begin{proof}
  Let $P \in \Spec(R)$ be a prime ideal of height~$n$ lying minimally
  over $(a_1 \upto a_n)_R$. Clearly it suffices to show that the
  images of the $a_i$ in the localization $R_P$ are algebraically
  independent.  Substituting $R$ by $R_P$, we may therefore assume
  that $R$ is a local ring and $a_1 \upto a_n$ form a system of
  parameters. With $\mathfrak{q} := (a_1 \upto a_n)_R$, this implies
  that for all $j \in \NN_0$ the module $R/\mathfrak{q}^j$ has finite
  length, and for sufficiently large~$j$ this length is given by a
  polynomial of degree~$n$ in~$j$ (see
  \mycite[Theorem~17]{Matsumura:80}).
  
  By way of contradiction, assume that $a_1 \upto a_n$ are
  algebraically dependent. This means that there exists a monomial $t
  := \prod_{i=1}^n x_i^{d_i}$ and further monomials $t_k :=
  \prod_{i=1}^n x_i^{e_{k,i}}$ that are greater than~$t$ with respect
  to some monomial ordering such that
  \begin{equation} \label{eqAde}
    \prod_{i=1}^n a_i^{d_i} = \sum_{k=1}^m r_k \prod_{i=1}^n
    a_i^{e_{k,i}},
  \end{equation}
  where the $r_k$ are elements of $R$. By~[\citenumber{Kemper.Comalg},
  Exercise~9.2(b)], there exist positive integers $w_1 \upto w_n$ such
  that $\sum_{i=1}^n w_i d_i < \sum_{i=1}^n w_i e_{k,i}$ holds for $k
  \in \{1 \upto m\}$. (The exercise uses the so-called convex cone of
  the monomial ordering, and a solution is provided in the book.)
  Writing $(\underline{d}) = (d_1 \upto d_n)$ and $w(\underline{d}) :=
  \sum_{i=1}^n w_i d_i$, we can express the inequalities as
  \begin{equation} \label{eqW}
    w(\underline{d}) < w(\underline{e}_k) \quad (k \in \{1 \upto m\}).
  \end{equation}
  For $j \in \NN_0$ we define the ideal
  \[
  I_j := \left(\left.\smash{\prod_{i=1}^n} \strut a_i^{e_i} \right|
    e_1 \upto e_n \in \NN_0, \ w(\underline{e}) \ge j\right)_R
  \subseteq R.
  \]
  With $\overline{w}:=\max\{w_1 \upto w_n\}$ we have $I_{\overline{w}
    j} \subseteq \mathfrak{q}^j$ for all~$j$, so for~$j$ sufficiently
  large, the length of $R/I_{\overline{w} j}$ is bounded below by a
  polynomial of degree~$n$ in~$j$.
  
  Clearly $I_{j+1} \subseteq I_j$. We write
  \[
  m_j := \left|\left\{(e_1 \upto e_n) \in \NN_0^n \mid \strut
      w(\underline{e}) = j\right\}\right| \quad (j \in\ZZ)
  \]
  and claim that $I_j/I_{j+1}$ is generated as an $R$-module by $m_j -
  m_{j - w(\underline{d})}$ elements. In fact, $I_j/I_{j+1}$ is
  clearly generated by all $\prod_{i=1}^n a_i^{e_i}$ with
  $w(\underline{e}) = j$. But if $(\underline{e}) \in \NN_0^n$ has the
  form $(\underline{e}) = (\underline{d}) + (\underline{e}')$ with
  $w(\underline{e}') = j - w(\underline{d})$, then
  \[
  \prod_{i=1}^n a_i^{e_i} = \prod_{i=1}^n a_i^{d_i + e_i'}
  \underset{\eqref{eqAde}}{=} \sum_{k=1}^m r_k \prod_{i=1}^n
  a_i^{e_{k,i} + e_i'} \in I_{j+1},
  \]
  since
  \[
  w(\underline{e}_k + \underline{e}') = w(\underline{e}_k) +
  w(\underline{e}')\underset{\eqref{eqW}}{>} w(\underline{d}) +
  w(\underline{e}') = j.
  \]
  So we can exclude such a product $\prod_{i=1}^n a_i^{e_i}$ from our
  set of generators and are left with $m_j - m_{j - w(\underline{d})}$
  generators, as claimed. The definition of $I_j$ implies that
  $\mathfrak{q} I_j \subseteq I_{j+1}$, so $I_j/I_{j+1}$ is an
  $R/\mathfrak{q}$-module. With $l_0 := \length(R/\mathfrak{q})$, we
  obtain
  \[
  \length(R/I_j) = \sum_{i=0}^{j-1} \length\left(I_i/I_{i+1}\right)
  \le l_0 \sum_{i=0}^{j-1} \left(m_i - m_{i- w(\underline{d})}\right)
  = l_0 \!\!\! \sum_{i = j - w(\underline{d})}^{j-1} m_i.
  \]
  In the ring $\ZZ[[t]]$ of formal power series over $\ZZ$, we have
  \begin{align*}
    \sum_{j=0}^\infty \left(l_0 \!\!\! \strut\smash{\sum_{i = j -
          w(\underline{d})}^{j-1} \!\!\! m_i}\right) t^j & = l_0
    \sum_{i=0}^\infty \sum_{j=i+1}^{i+w(\underline{d})} m_i t^j = l_0
    \left(\strut\smash{\sum_{i=0}^\infty m_i t^i}\right)
    \left(\strut\smash{\sum_{j=1}^{w(\underline{d})} t^j}\right) \\
    & = \frac{l_0 \left(t + t^2 + \cdots +
        t^{w(\underline{d})}\right)}{(1-t^{w_1}) \cdots (1-t^{w_n})} =
    \frac{g(t)}{(1-t^{w_0})^n} = g(t) \cdot \sum_{j=0}^\infty \binom{j
      + n - 1}{n - 1} t^{w_0 j},
  \end{align*}
  where $w_0 := \lcm(w_1 \upto w_n)$ and $g(t) \in \ZZ[t]$. It follows
  that $\length(R/I_j)$ is bounded above by a polynomial of degree at
  most $n-1$ in~$j$, contradicting the fact that
  $\length(R/I_{\overline{w} j})$ is bounded below by a polynomial of
  degree~$n$ for large~$j$.  This contradiction finishes the proof.
\end{proof}

\begin{rem*}
  The converse statement of \tref{tLower} may fail: For example, the
  only prime ideal lying minimally over the class of~$x$ in $R :=
  K[x,y]/(x \cdot y)$ (with $K$ a field and~$x$ and~$y$
  indeterminates) has height~$0$, but the class of~$x$ is nevertheless
  algebraically independent over $R$.
\end{rem*}

\begin{proof}[Proof of \tref{tMain}\eqref{tMainA}]
  For any monomial ordering~``$\preceq$'', it follows directly from
  \dref{dTrdeg} that $\trdeg(R) \le \trdeg_\preceq(R)$. Applying this
  to the lexicographic ordering and using~\eqref{eqLower} and
  \tref{tCL} yields \tref{tMain}\eqref{tMainA}.
\end{proof}

Let $A$ be an algebra over a ring $R$ and let~''$\preceq$'' be a
monomial ordering. Then the inequalities
\begin{equation} \label{eqChain}
  \trdeg(A) \le \trdeg(A:R) \le \trdeg_\preceq(A:R)
\end{equation}
follow directly from \dref{dTrdeg}. The next goal is to prove
$\trdeg_{\lex}(A:R) \le \dim(A)$ in the case that $R$ is a Noetherian
Jacobson ring and $A$ is finitely generated over $R$. To achieve this
goal, we need four lemmas.  The proof of part~\eqref{lNgoA} of the
following lemma was shown to me by Viet-Trung Ngo.

\begin{lemma} \label{lNgo}
  Let $R$ be a Noetherian ring and $P \in \Spec(R)$.
  \begin{enumerate}
  \item \label{lNgoA} If $Q \in \Spec(R)$ such that $P \subseteq Q$ and
    $Q/P \in \Spec(R/P)$ has height at least~2, then there exist
    infinitely many prime ideals $P' \in \Spec(R)$ such that $P
    \subseteq P' \subseteq Q$ and $\height(P'/P) = 1$.
  \item \label{lNgoB} If $\mathcal{M} \subseteq \Spec(R)$ is an
    infinite set of prime ideals such that every $P' \in \mathcal{M}$
    satisfies $P \subseteq P'$ and $\height(P'/P) = 1$, then $P =
    \bigcap_{P' \in \mathcal{M}} P'$.
  \end{enumerate}
\end{lemma}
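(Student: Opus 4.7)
My plan for both parts is to reduce to the case $P = 0$ by replacing $R$ by $R/P$. Every prime ideal in either statement contains $P$, and intersections and heights are preserved under this correspondence, so I may assume $R$ is a Noetherian domain, and part~\eqref{lNgoA} becomes: a prime $Q$ of height at least~$2$ in a Noetherian domain contains infinitely many height-$1$ primes. Part~\eqref{lNgoB} becomes: the intersection of any infinite family of height-$1$ primes in a Noetherian domain is~$\{0\}$.

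For part~\eqref{lNgoA}, I would argue by contradiction. Suppose only finitely many height-$1$ primes $P_1, \ldots, P_s$ are contained in $Q$. Since $\height(Q) \ge 2 > \height(P_i)$, the prime $Q$ is not contained in any $P_i$, and prime avoidance then yields an element $a \in Q \setminus \bigcup_{i=1}^s P_i$. As $R$ is a domain and $a \ne 0$, Krull's principal ideal theorem tells me that every prime minimal over $(a)_R$ has height exactly~$1$. Now $Q$ is a prime containing $(a)_R$, so $Q/(a)_R$ is a prime of the Noetherian ring $R/(a)_R$ and therefore contains some minimal prime of $R/(a)_R$; pulling back, $Q$ contains some prime $P'$ minimal over $(a)_R$. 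This $P'$ has height~$1$, is contained in $Q$, and contains~$a$, so it differs from each~$P_i$, giving the contradiction.

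For part~\eqref{lNgoB}, I would again argue by contradiction: suppose there is a nonzero $a \in \bigcap_{P' \in \mathcal{M}} P'$. Then every $P' \in \mathcal{M}$ contains $(a)_R$, so $P'$ contains a prime $\mathfrak{p}$ minimal over $(a)_R$. By Krull's principal ideal theorem $\height(\mathfrak{p}) = 1$, and since $\height(P') = 1$ also, we must have $P' = \mathfrak{p}$. Hence every element of $\mathcal{M}$ is a minimal prime over $(a)_R$. But a Noetherian ring has only finitely many minimal primes over any ideal, contradicting $|\mathcal{M}| = \infty$.

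I do not foresee a real obstacle; the only mild point to verify carefully is that $Q$ indeed contains a minimal prime over $(a)_R$ in part~\eqref{lNgoA}, which is immediate from the fact that $Q/(a)_R$, being a prime of the Noetherian quotient $R/(a)_R$, must sit above a minimal prime there. Both parts thus reduce to a clean combination of prime avoidance with Krull's principal ideal theorem and the finiteness of minimal primes in a Noetherian ring.
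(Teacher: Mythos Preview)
Your proof is correct and follows essentially the same route as the paper: reduce to a Noetherian domain by factoring out $P$, then combine prime avoidance with Krull's principal ideal theorem for part~\eqref{lNgoA} and the finiteness of minimal primes over an ideal for part~\eqref{lNgoB}. The paper additionally localizes at $Q$ in part~\eqref{lNgoA} (a harmless convenience) and in part~\eqref{lNgoB} argues with the full intersection $I = \bigcap_{P' \in \mathcal{M}} P'$ rather than a single nonzero element of it, but these are cosmetic differences.
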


\begin{proof}
  \begin{enumerate}
  \item[\eqref{lNgoA}] By factoring out $P$ and localizing at $Q$ we
    may assume that $R$ is a local domain with maximal ideal $Q$, and
    $P = \{0\}$. For $a \in Q \setminus \{0\}$ there exists $P' \in
    \Spec(R)$ which is minimal over $(a)_R$. By the principal ideal
    theorem, $P'$ has height one. So
    \[
    Q \subseteq \bigcup
    \begin{Sb}
      P' \in \Spec(R), \\
      \height(P') = 1
    \end{Sb}
    P'.
    \]
    If there existed only finitely many $P' \in \Spec(R)$ of height
    one, it would follow by the prime avoidance lemma
    (see~[\citenumber{Kemper.Comalg}, Lemma~7.7]) that $Q$ is
    contained in one of them, contradicting the hypothesis $\height(Q)
    > 1$
  \item[\eqref{lNgoB}] Clearly $P \subseteq \bigcap_{P' \in
      \mathcal{M}} P' =: I$. If $P \subsetneqq I$, then every $P' \in
    \mathcal{M}$ would be a minimal prime ideal over $I$ and so
    $\mathcal{M}$ would be finite (see~[\citenumber{Kemper.Comalg},
    Corollary~3.14(d)]). \qed
  \end{enumerate}
  \renewcommand{\qed}{}
\end{proof}


\begin{lemma} \label{lJacobson}
  A Noetherian ring $R$ is a Jacobson ring if and only if for every $P
  \in \Spec(R)$ with $\dim(R/P) = 1$ there exist infinitely many
  maximal ideals $\mathfrak{m} \in \Spec(R)$ with $P \subseteq
  \mathfrak{m}$.
\end{lemma}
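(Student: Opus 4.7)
The plan is to prove both implications separately; the reverse direction is by far the more delicate one and is where I expect to do almost all the work.

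For the forward direction I would assume $R$ is Jacobson and let $P$ be a prime with $\dim(R/P) = 1$. Then $\bar R := R/P$ is a Noetherian Jacobson domain, so its Jacobson radical coincides with its nilradical and equals $\{0\}$. Since $\dim(\bar R) = 1$, every maximal ideal of $\bar R$ is nonzero. If only finitely many maximal ideals $\bar{\mathfrak m}_1 \upto \bar{\mathfrak m}_k$ existed, picking a nonzero $x_i$ in each $\bar{\mathfrak m}_i$ would give a nonzero product $x_1 \cdots x_k \in \bar{\mathfrak m}_1 \cap \cdots \cap \bar{\mathfrak m}_k = \{0\}$, contradicting that $\bar R$ is a domain. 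Hence infinitely many maximal ideals of $R$ contain $P$.

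For the converse I would fix $Q \in \Spec(R)$ and $a \in R \setminus Q$ and aim to produce a maximal ideal that contains $Q$ but avoids $a$; this property, quantified over all $Q$ and $a$, is equivalent to $R$ being Jacobson. By Noetherianity, the collection of primes containing $Q$ but not $a$ has a maximal element $P^*$, and by maximality every prime strictly containing $P^*$ contains $a$. I would then show $\dim(R/P^*) = 0$ by ruling out the other two possibilities. If $\dim(R/P^*) = 1$, the hypothesis supplies an infinite family $\mathcal{M}$ of maximal ideals above $P^*$; each lies strictly above $P^*$ and so contains $a$, and Lemma~\ref{lNgo}\eqref{lNgoB} forces $P^* = \bigcap_{\mathfrak m \in \mathcal{M}} \mathfrak m \ni a$, a contradiction. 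If $\dim(R/P^*) \ge 2$, I would pick a maximal ideal $\mathfrak m^* \supseteq P^*$ with $\height(\mathfrak m^*/P^*) \ge 2$, invoke Lemma~\ref{lNgo}\eqref{lNgoA} to obtain infinitely many primes $P'$ with $P^* \subsetneq P' \subseteq \mathfrak m^*$ and $\height(P'/P^*) = 1$, observe that each such $P'$ contains $a$, and apply Lemma~\ref{lNgo}\eqref{lNgoB} a second time to conclude $a \in P^* = \bigcap P'$. Hence $P^*$ must be maximal, which is what was wanted.

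The main obstacle is seeing how the two parts of Lemma~\ref{lNgo} are meant to dovetail: part~\eqref{lNgoA} manufactures a cofinal supply of height-$1$ primes inside any interval of height at least $2$, while part~\eqref{lNgoB} collapses any infinite such family back down onto the bottom prime. Together they leave no room for the maximal ``bad'' prime $P^*$ to have positive dimension, which is exactly what drives the converse direction.
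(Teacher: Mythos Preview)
Your proof is correct and rests on the same two ingredients as the paper's---Noetherian maximality to produce an extremal ``bad'' prime, and the tandem use of Lemma~\ref{lNgo}\eqref{lNgoA} and~\eqref{lNgoB} to squeeze its codimension. The organization differs: the paper argues by contrapositive in both directions. For the hard implication it assumes $R$ is not Jacobson, picks $P$ maximal among primes that are \emph{not} intersections of maximal ideals, uses Lemma~\ref{lNgo} to force $\dim(R/P)=1$, and then invokes \ref{lNgo}\eqref{lNgoB} once more to conclude that only finitely many maximal ideals lie over $P$. You instead work directly, fixing $Q$ and $a\notin Q$ and taking $P^*$ maximal among primes above $Q$ avoiding $a$; your two cases $\dim(R/P^*)=1$ and $\dim(R/P^*)\ge 2$ mirror the paper's single dichotomy, with the hypothesis entering only in the first case. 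Your route is arguably more transparent because the element $a$ makes the contradiction concrete, while the paper's is a bit shorter since it handles the $\dim\ge 2$ obstruction in one stroke rather than splitting off a separate case.
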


\begin{proof}
  We prove that the negations of both statements are equivalent. First
  assume that $R$ is not Jacobson. Choose $P \in \Spec(R)$ to be
  maximal among the prime ideals that are not intersections of maximal
  ideals. Then $P$ itself is not maximal, so $\dim(R/P) \ge 1$. On the
  other hand, it is impossible that $\dim(R/P) > 1$, since by
  \lref{lNgo} that would imply that $P$ is the intersection of
  strictly larger prime ideals and therefore (by its maximality) of
  maximal ideals. So $\dim(R/P) = 1$. Therefore every maximal ideal
  $\mathfrak{m}$ containing $P$ satisfies $\height(\mathfrak{m}/P) =
  1$, so by \lref{lNgo}\eqref{lNgoB} only finitely many
  such~$\mathfrak{m}$ exist.

  Conversely, assume that $R$ has a prime ideal $P$ with $\dim(R/P) =
  1$ such that only finitely many maximal ideals, say $\mathfrak{m}_1
  \upto \mathfrak{m}_n$, contain $P$. If $P = \mathfrak{m}_1 \cap
  \cdots \cap \mathfrak{m}_n$, then $P$ would contain at least one of
  the $\mathfrak{m}_i$, so $P$ would be maximal. Since this is not the
  case, $R$ is not Jacobson.
\end{proof}

The following lemma may be surprising since it does not require the
ring $R$ to be Jacobson.

\begin{lemma} \label{lBoundary}
  Let~$a$ be an element of a Noetherian ring $R$ and set
  \[
  U_a := \left\{a^n (1 + a x) \mid n \in \NN_0, \ x \in R\right\}
  \]
  Then the localization $U_a^{-1} R$ is a Jacobson ring.
\end{lemma}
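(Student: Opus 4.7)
The plan is to verify the hypothesis of \lref{lJacobson} for $S := U_a^{-1}R$, which is Noetherian since $R$ is. Given $\tilde P \in \Spec(S)$ with $\dim(S/\tilde P) = 1$, I will produce infinitely many maximal ideals of $S$ containing~$\tilde P$. The first step is the observation that for a prime $P \in \Spec(R)$, the condition $P \cap U_a = \emptyset$ is equivalent to $a \notin P$ together with $(P, a)_R \ne R$: the former rules out the elements $a^n$ for $n \ge 1$, while the latter says precisely that $1 + ax \notin P$ for all $x \in R$. Hence $\Spec(S)$ is in bijection with the set of such primes of~$R$.

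Let $\tilde P$ correspond to $P \in \Spec(R)$. By the coheight-1 hypothesis on $\tilde P$, there exists $Q \in \Spec(R)$ with $P \subsetneq Q$, $a \notin Q$, and $(Q, a)_R \ne R$. I would choose a maximal ideal $\mathfrak{m} \supseteq (Q, a)_R$ of~$R$; then $P \subsetneq Q \subsetneq \mathfrak{m}$, with strictness on the right since $a \in \mathfrak{m} \setminus Q$, so $\height(\mathfrak{m}/P) \ge 2$. \lref{lNgo}\eqref{lNgoA} then supplies infinitely many $P^* \in \Spec(R)$ with $P \subsetneq P^* \subseteq \mathfrak{m}$ and $\height(P^*/P) = 1$. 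Krull's principal ideal theorem in the Noetherian domain $R/P$ shows that only finitely many such $P^*$ can contain~$a$ (each would then be a minimal prime over~$\bar a$), so infinitely many of them satisfy $a \notin P^*$.

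For each such $P^*$ we have $(P^*, a)_R \subseteq \mathfrak{m} \ne R$, so $P^* \cap U_a = \emptyset$, and $U_a^{-1} P^*$ is a prime of $S$ strictly containing~$\tilde P$. The assumption $\dim(S/\tilde P) = 1$ rules out chains $\tilde P \subsetneq U_a^{-1} P^* \subsetneq \mathfrak{q}$ in~$S$, so each $U_a^{-1} P^*$ must already be a maximal ideal of~$S$. Distinct $P^*$ yield distinct maximal ideals above~$\tilde P$, and \lref{lJacobson} concludes. The main obstacle is the step in the second paragraph: exploiting the coheight-1 hypothesis to produce a maximal ideal $\mathfrak{m}$ of $R$ with $\height(\mathfrak{m}/P) \ge 2$, which is exactly what \lref{lNgo} needs and what lets the finite loss from Krull's theorem be absorbed.
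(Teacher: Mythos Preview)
Your argument is correct and follows essentially the same route as the paper's. Both verify the criterion of \lref{lJacobson} by enlarging to a prime of $R$ containing~$a$ (the paper takes any prime over $P_1 + (a)_R$, you go all the way to a maximal ideal $\mathfrak m$), invoke \lref{lNgo}\eqref{lNgoA} to obtain infinitely many height-one primes over~$P$, and then discard the finitely many among them containing~$a$ (the paper does this via \lref{lNgo}\eqref{lNgoB}, you directly via finiteness of minimal primes over $(\bar a)$ in $R/P$).
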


\begin{proof}
  We will use the criterion from \lref{lJacobson} and the
  inclusion-preserving bijection between the prime ideals in $S :=
  U_a^{-1} R$ and the prime ideals $P \in \Spec(R)$ satisfying $U_a
  \cap P = \emptyset$ (see~[\citenumber{Kemper.Comalg}, Theorem~6.5]).
  Let $P \in \Spec(R)$ with $U_a \cap P = \emptyset$ such that
  $\dim\left(S/U_a^{-1} P\right) = 1$. Then there exists $P_1 \in
  \Spec(R)$ with $P \subsetneqq P_1$ and $U_a \cap P_1 = \emptyset$.
  The latter condition implies $a \notin P_1$ and $1 \notin P_1 +
  (a)_R$, so there exists $Q \in \Spec(R)$ such that $P_1 + (a)_R
  \subseteq Q$. It follows that $\height(Q/P) > 1$, so by
  \lref{lNgo}\eqref{lNgoA}, the set
  \[
  \mathcal{M} := \left\{P' \in \Spec(R) \mid P \subseteq P' \subseteq
    Q, \ \height(P'/P) = 1\right\}
  \]
  is infinite. Assume that the subset $\mathcal{M}' := \{P' \in
  \mathcal{M} \mid a \in P'\}$ is also infinite. Then
  \lref{lNgo}\eqref{lNgoB} would imply $P = \bigcap_{P' \in
    \mathcal{M}'} P'$, so $a \in P$, contradicting $U_a \cap P =
  \emptyset$. We conclude that $\mathcal{M} \setminus \mathcal{M}'$ is
  infinite. Let $P' \in \mathcal{M} \setminus \mathcal{M}'$. Then $P'
  \subseteq Q$ and $a \in Q$ imply that $1 + a x \notin P'$ for every
  $x \in R$, so $U_a \cap P' = \emptyset$. Therefore $U_a^{-1} P' \in
  \Spec(S)$, and we also have $U_a^{-1} P \subsetneqq U_a^{-1} P'$.
  Since $\dim\left(S/U_a^{-1} P\right) = 1$, this implies that
  $U_a^{-1} P'$ is a maximal ideal. So by \lref{lJacobson}, the
  infinity of $\mathcal{M} \setminus \mathcal{M}'$ implies that $S$ is
  a Jacobson ring.
\end{proof}

\begin{rem*}
  The localization $U_a^{-1} R$ from \lref{lBoundary} was also used by
  \mycite{Coquand.Lombardi}. They called it the {\em boundary} of~$a$
  in $R$.
\end{rem*}

\begin{lemma} \label{lField}
  Let $R$ be a Jacobson ring and let $A$ be a finitely generated
  $R$-algebra that is a field. Then the kernel of the map $R \to A$ is
  a maximal ideal.
\end{lemma}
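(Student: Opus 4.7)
The plan is to reduce the lemma to the classical statement: if $B$ is a Jacobson domain and $C$ is a finitely generated $B$-algebra that happens to be a field, then $B$ itself must be a field. Setting $\mathfrak{p} := \ker(R \to A)$ and replacing $R$ by $R/\mathfrak{p}$, which is still Jacobson since quotients of Jacobson rings are Jacobson, performs this reduction: $\{0\}$ is a maximal ideal of $R/\mathfrak{p}$ precisely when $\mathfrak{p}$ is maximal in $R$. So I may assume from the outset that $R$ is a Jacobson domain with $R \subseteq A$, $A$ a finitely generated $R$-algebra and a field, and the task becomes showing that $R$ is a field.

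For the reduced statement I would first pass to the fraction field $K$ of $R$: since $A$ is finitely generated over $K$ and is a field, Zariski's lemma (the classical Nullstellensatz) gives that $A$ is a finite algebraic extension of $K$. Each of the finitely many $R$-algebra generators of $A$ then satisfies a monic polynomial with coefficients in $K$, and clearing denominators produces a single $s \in R \setminus \{0\}$ such that $A$ is integral over the localization $R_s$. Since $A$ is a field integral over the subdomain $R_s$, inverting the integral equation for $b^{-1} \in A$ (for each nonzero $b \in R_s$) shows that $b^{-1} \in R_s$, so $R_s$ itself is a field.

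The endgame invokes the Jacobson hypothesis. If $R$ were not a field, then $\{0\}$ would fail to be maximal, so every maximal ideal of $R$ would be nonzero; combined with the fact that every nonzero prime of $R$ must contain $s$ (because $R_s$ is a field, hence has only $\{0\}$ as a prime, which pulls back to the primes of $R$ avoiding $s$), this forces $s$ to lie in every maximal ideal of $R$. But $R$ being a Jacobson domain means that the intersection of its maximal ideals equals its nilradical, namely $\{0\}$. Hence $s = 0$, contradicting $s \ne 0$. I do not anticipate a serious obstacle here; the argument is essentially the familiar principle that contractions of maximal ideals via finitely generated algebras over Jacobson rings are maximal, specialized to the maximal ideal $\{0\} \subseteq A$.
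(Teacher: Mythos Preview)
Your proof is correct and follows the same overall architecture as the paper: reduce to the case where $R$ is a domain contained in $A$, use Zariski's lemma to see that $A$ is finite over $K=\operatorname{Quot}(R)$, produce a single nonzero $s\in R$ such that the localization $R[s^{-1}]$ is a field, and then invoke the Jacobson hypothesis to conclude that $R$ itself is a field.

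The one genuine difference is in how the localization step is carried out. The paper chooses a $K$-basis of $A$ and uses the regular representation $\phi\colon A\to K^{n\times n}$; taking $b$ to be a common denominator of the matrix entries of the images of the generators forces $\phi(A)\subseteq R[b^{-1}]^{n\times n}$, and reading off the diagonal of $\phi(a^{-1})$ gives $a^{-1}\in R[b^{-1}]$ directly. You instead clear denominators in the minimal polynomials of the generators to make $A$ integral over $R_s$, and then use the standard trick of multiplying an integral equation for $b^{-1}$ by $b^{n-1}$. Both devices are standard and yield the same conclusion; yours is arguably the textbook route, while the paper's matrix argument is a slightly slicker variant that avoids mentioning integrality. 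For the endgame, the paper simply cites Eisenbud's Lemma~4.20, whereas you unpack that lemma explicitly (every nonzero prime contains $s$, hence $s$ lies in the Jacobson radical, which is zero). There is no gap.
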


\begin{proof}
  We may assume that the map $R \to A$ is injective, so we may view
  $R$ as a subring of $A$. We need to show that $R$ is a field. Since
  $A$ is finitely generated as an algebra over $K := \Quot(R)$ (the
  field of fractions), it is algebraic over $K$
  (see~[\citenumber{Kemper.Comalg}, Lemma~1.1(b)]). So $A$ is a finite
  field extension of $K$, and choosing a basis yields a map
  \[
  \map{\phi}{A}{K^{n \times n}}
  \]
  sending each $a \in A$ to the representation matrix of the linear
  map given by multiplication by~$a$. Let $a_1 \upto a_n$ be
  generators of $A$ as an $R$-algebra and choose $b \in R \setminus
  \{0\}$ to be a common denominator of all matrix entries of all
  $\phi(a_i)$. Then $\phi(a_i) \in R[b^{-1}]^{n \times n}$, and
  since~$\phi$ is a homomorphism of $R$-algebras, its image is
  contained in $R[b^{-1}]^{n \times n}$. If $a \in R \setminus \{0\}$,
  then $a^{-1} \in A$, so
  \[
  \diag(a^{-1} \upto a^{-1}) = \phi(a^{-1}) \in R[b^{-1}]^{n \times
    n}.
  \]
  This implies $a^{-1} \in R[b^{-1}]$, so $R[b^{-1}]$ is a field. From
  this it follows by \mycite[Lemma~4.20]{eis} that $R$ is a field.
\end{proof}

As announced, we can now prove the upper bound for the transcendence
degree. As above, $\lex$ stands for the lexicographic ordering with
$x_i > x_{i+1}$ for all~$i$.

\begin{prop} \label{pUpper}
  \begin{enumerate}
  \item \label{pUpperA} If $R$ is a ring, then
    \[
    \trdeg_{\lex}(R) \le \dim(R).
    \]
  \item \label{pUpperB} If $A$ is a finitely generated algebra over a
    Noetherian Jacobson ring $R$, then
    \[
    \trdeg_{\lex}(A:R) \le \dim(A).
    \]
  \end{enumerate}
\end{prop}

\begin{rem*}
  Part~\eqref{pUpperA} is contained in \citename{Coquand.Lombardi}'s
  result (\tref{tCL}). We include a proof of this part for the
  reader's convenience.
\end{rem*}

\begin{proof}[Proof of \pref{pUpper}]
  We prove both parts simultaneously, setting $A = R$ in the case of
  part~\eqref{pUpperA}. It is clear that without loss of generality we
  may assume the map $R \to A$ to be injective, so we may view $R$ as
  a subring of $A$. We may also assume $A \ne \{0\}$ and $\dim(A) <
  \infty$. We use induction on $n := \dim(A) + 1$.
  
  Let $a_1 \upto a_n \in A$. Consider the multiplicative set
  \[
  U := \left\{f(a_n) \mid f \in R[x] \ \text{is submonic}\right\}
  \subseteq A
  \]
  and set $A' := U^{-1} A$. By way of contradiction, assume $\dim(A')
  \ge n - 1$. Using the correspondence between prime ideals in $A'$
  and prime ideals in $A$ that do not intersect with $U$, we obtain a
  chain $P_1 \subsetneqq P_2 \subsetneqq \cdots \subsetneqq P_n$ with
  $P_i \in \Spec(A)$ and $U \cap P_i = \emptyset$. Since $\dim(A) = n
  - 1$, $A/P_n$ must be a field. In the case of part~\eqref{pUpperB},
  it follows by \lref{lField} that $R \cap P_n \subseteq R$ is a
  maximal ideal. In the case of part~\eqref{pUpperA}, this is also
  true since $R = A$. So $A/P_n$ is an algebraic field extension of
  $R/(R \cap P_n)$ (see~[\citenumber{Kemper.Comalg}, Lemma~1.1(b)]).
  From $U \cap P_n = \emptyset$ we conclude that $a_n + P_n \in A/P_n$
  is invertible. So there exists $g \in R[x]$ such that $a_n g(a_n) -
  1 \in P_n$.  But $1 - x g$ is submonic, so $1 - a_n g(a_n) \in U$,
  contradicting $U \cap P_n = \emptyset$.
  
  We conclude that $\dim(A') + 1 < n-1$. If $A' = \{0\}$ (which must
  happen if $n = 1$), then $0 \in U$, so $a_n$ satisfies a submonic
  equation and therefore $a_1 \upto a_n$ are algebraically dependent
  with respect to~$\lex$. Having dealt with this case, we may assume
  $A' \ne \{0\}$.
  
  Clearly $R' := U^{-1} R[a_n] \subseteq A'$. In the case of
  part~\eqref{pUpperA} we have $R' = A'$. In the case of
  part~\eqref{pUpperB}, $A'$ is finitely generated as an $R'$-algebra,
  and by \lref{lBoundary}, $R'$ is a Jacobson ring. So in both cases
  the induction hypothesis tells us that $\frac{a_1}{1} \upto
  \frac{a_{n-1}}{1} \in A'$ satisfy a polynomial $\tilde{f} \in R'[x_1
  \upto x_{n-1}]$ that is submonic with respect to~$\lex$. Multiplying
  the coefficients of~$\tilde{f}$ by a suitable element from $U$, we
  obtain $\widehat{f} \in R[a_n][x_1 \upto x_{n-1}]$ whose trailing
  coefficient $c_t \in R[a_n]$ lies $U$ such that $\widehat{f}(a_1
  \upto a_{n-1}) = 0$. By replacing every coefficient $c \in R[a_n]$
  of~$\widehat{f}$ by a $c' \in R[x_n]$ with $c'(a_n) = c$, we obtain
  $f \in R[x_1 \upto x_n]$ with $f(a_1 \upto a_n) = 0$.  Since $c_t
  \in U$, we may choose the coefficient $c_t' \in R[x_n]$ of~$f$ to be
  submonic. The trailing coefficient of~$f$ (with respect to~$\lex$)
  is equal to the trailing coefficient of $c_t'$, which is~$1$.
  Therefore~$f$ is submonic with respect to~$\lex$.
  
  We conclude that $a_1 \upto a_n$ are algebraically dependent with
  respect to~$\lex$. Since they were chosen as arbitrary elements of
  $A$, this shows that $\trdeg_{\lex}(A:R) \le n - 1 = \dim(A)$.
\end{proof}

We can now finish the proof of \tref{tMain}.

\begin{proof}[Proof of \tref{tMain}\eqref{tMainB} and~\eqref{tMainC}]
  \tref{tCL} and \dref{dTrdeg} imply
  \[
  \dim(A) = \trdeg_{\lex}(A) \le \trdeg_{\lex}(A:R).
  \]
  If $A$ is Noetherian, \eqref{eqLower} and~\eqref{eqChain} yield the
  finer inequality
  \[
  \dim(A) \le \trdeg(A) \le \trdeg(A:R) \le \trdeg_{\lex}(A:R).
  \]
  So for the proof of~\eqref{tMainB} and~\eqref{tMainC} it suffices to
  show that $\trdeg_{\lex}(A:R) \le \dim(A)$. We may assume that $0
  \le \dim(A) < \infty$ and write $n = \dim(A) + 1$.
  
  By hypothesis, $A$ is a subalgebra of a finitely generated
  $R$-algebra $B$. Let $P_1 \upto P_r \in \Spec(B)$ be the minimal
  prime ideals of $B$, and assume that we can show $\trdeg_{\lex}(A/A
  \cap P_i:R) \le \dim(A/A \cap P_i)$ for all~$i$. Then for $a_1 \upto
  a_n \in A$ there exist polynomials $f_1 \upto f_r \in R[x_1 \upto
  x_n]$ that are submonic with respect to $\lex$ such that $f_i(a_1
  \upto a_n) \in P_i$. So $\prod_{i=1}^r f_i(a_1 \upto a_n)$ lies in
  the nilradical of $B$, hence there exists~$k$ such that $a_1 \upto
  a_n$ satisfy the polynomial $\prod_{i=1}^r f_i^k$, which is also
  submonic with respect to $\lex$. This shows that we may assume $B$
  to be an integral domain.
  
  By \mycite[Proposition~2.1(b)]{Giral:81}
  (or~[\citenumber{Kemper.Comalg}, Exercise~10.3]), there exists a
  nonzero $a \in A$ such that $A[a^{-1}]$ is finitely generated as an
  $R$-algebra. So we may apply \pref{pUpper}\eqref{pUpperB} and get
  $\trdeg_{\lex}\left(A[a^{-1}]:R\right) \le
  \dim\left(A[a^{-1}]\right)$. We obtain
  \[
  \trdeg_{\lex}(A:R) \le \trdeg_{\lex}\left(A[a^{-1}]:R\right) \le
  \dim\left(A[a^{-1}]\right) \le \dim(A),
  \]
  where the first inequality follows directly from \dref{dTrdeg}, and
  the last since $A[a^{-1}]$ is a localization of $A$. This completes
  the proof.
\end{proof}

\begin{rem} \label{rConverse}
  \begin{enumerate}
  \item \label{rConverseA} The hypothesis that $R$ be a Jacobson ring
    cannot be dropped from \tref{tMain}\eqref{tMainB}
    and~\eqref{tMainC}. In fact, the validity of
    \tref{tMain}\eqref{tMainB} and~\eqref{tMainC} characterizes
    Jacobson rings in the following sense: If $R$ is a non-Jacobson
    ring, then by \mycite[Lemma~4.20]{eis}, $R$ has a nonmaximal prime
    ideal $P$ such $S := R/P$ contains a nonzero element~$b$ for which
    $A := S[b^{-1}]$ is a field. So $\dim(A) = 0$, but~$b$ is not a
    unit in $S$, so \exref{exTrdeg}\eqref{exTrdeg1} yields
    \[
    1 \le \trdeg(S) = \trdeg(S:R) \le \trdeg(A:R) \le
    \trdeg_{\lex}(A:R).
    \]
    Since $A$ is a finitely generated $R$-algebra, the assertions of
    \tref{tMain}\eqref{tMainB} and~\eqref{tMainC} fail for $R$.
  \item \label{rConverseB} Neither can the hypothesis that $A$ is
    subfinite be dropped. In fact, if $R$ is any nonzero ring, we can
    choose a maximal ideal $\mathfrak{m}$ of $R$ and form the
    polynomial ring $S := \left(R/\mathfrak{m}\right)[x]$ and $A :=
    \Quot(S)$. Then
    \[
    \dim(A) = 0 < 1 = \trdeg(A:R/\mathfrak{m}) =
    \trdeg(A:R) = \trdeg_{\lex}(A:R).
    \]
  \end{enumerate}
  \renewcommand{\remend}{}
\end{rem}

\section{Some applications and further results} \label{sApplications}

It seems to be rare that \tref{tMain} helps to compute the dimension
of rings. In fact, the transcendence degree seems to be the less
accessible quantity in most cases, so knowledge of the dimension
provides information about the structure of the ring that is encoded
in the transcendence degree. \exref{exTrdeg}\eqref{exTrdeg4} contains
such an instance. Here is a further example.

\begin{ex} \label{exNumber}
  Let~$a$ and~$b$ be two nonzero algebraic numbers (i.e., elements of
  an algebraic closure of $\QQ$). There exists $d \in \ZZ \setminus
  \{0\}$ such that~$a$ and~$b$ are integral over $\ZZ[d^{-1}]$, so $A
  := \ZZ\left[a,b,d^{-1}\right]$ has Krull dimension~$1$. By
  \tref{tMain}\eqref{tMainB}, $\trdeg_{\lex}(A:\ZZ) = 1$, so $a,b$
  satisfy a polynomial $f \in \ZZ[x_1,x_2]$ that is submonic with
  respect to $\lex$. If $x_1^m x_2^n$ is the trailing monomial of~$f$,
  then all monomials of~$f$ are divisible by $x_1^m$, so we may assume
  $m = 0$.  We obtain
  \[
  b^n = a \cdot g(a,b) + b^{n+1} \cdot h(a,b)
  \]
  with $g,h \in \ZZ[x_1,x_2]$ polynomials. It is not so clear how the
  existence of such a relation follows directly from the properties of
  algebraic numbers.
\end{ex}


The following corollary of \tref{tMain}\eqref{tMainB} may be new:

\begin{cor} \label{cAB}
  Let $R$ be a Noetherian Jacobson ring, $B$ a subfinite $R$-algebra,
  and $A \subseteq B$ a subalgebra. Then
  \[
  \dim(A) \le \dim(B).
  \]
\end{cor}

\begin{ex} \label{exInvariant}
  Let $R$ be a ring that is finitely generated as a $\ZZ$-algebra, $G
  \subseteq \Aut(R)$ a group of automorphisms of $R$ and $H \subseteq
  G$ a subgroup. Then \cref{cAB} tells us that
  \[
  \dim\left(R^G\right) \le \dim\left(R^H\right),
  \]
  even though the invariant rings need not be finitely generated (see
  \mycite{nag:a}).
\end{ex}

We also get the following geometric-topological consequence.

\begin{theorem} \label{tScheme}
  Let $X$ be a scheme of finite type over a Noetherian Jacobson ring
  $R$. Let $Y$ be a locally closed subset of the underlying
  topological space of $X$. Then $\dim(Y) = \dim(\overline{Y})$.
\end{theorem}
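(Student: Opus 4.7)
The inequality $\dim(Y)\le\dim(\overline{Y})$ is automatic, since any chain of irreducible closed subsets of $Y$ extends by closures to a chain of equal length in $\overline{Y}$. For the reverse inequality the plan is to reduce to an affine, integral situation and then invoke \tref{tMain} together with the elementary monotonicity of the transcendence degree under subring inclusions.

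First I would cover $X$ by affine opens $U_i=\Spec(A_i)$. Since closure commutes with restriction to an open subset, $\overline{Y}\cap U_i$ coincides with the closure of $Y\cap U_i$ taken inside $U_i$, and $\dim(Y)=\sup_i\dim(Y\cap U_i)$, with the analogous identity for $\overline{Y}$. Hence it is enough to treat each $U_i$ separately. Replacing $A_i$ by the coordinate ring of the reduced scheme structure on $\overline{Y\cap U_i}$, we may assume $X=\overline{Y}=\Spec(A)$ with $A$ reduced and finitely generated over $R$, and $Y$ a nonempty, hence dense, open subset of $\Spec(A)$. Let $P_1,\ldots,P_s$ be the minimal primes of the Noetherian reduced ring $A$; then $\dim(A)=\max_i\dim(A/P_i)$, and density of $Y$ forces $Y\cap V(P_i)\ne\emptyset$ for every $i$. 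Since $Y\cap V(P_i)\subseteq Y$, it suffices to show $\dim(Y\cap V(P_i))\ge\dim(A/P_i)$ for each $i$, which, by passage to $A/P_i$, reduces us to the case that $A$ is a domain and $Y\subseteq\Spec(A)$ is a nonempty open subset.

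Because the basic opens form a topology base, I would then choose $f\in A\setminus\{0\}$ with $D(f)\subseteq Y$. As $A$ is a domain, the localization map $A\hookrightarrow A[f^{-1}]$ is injective, so any submonic relation over $R$ holding in $A[f^{-1}]$ pulls back to the same relation in $A$; consequently any algebraically independent tuple in $A$ remains algebraically independent over $R$ in $A[f^{-1}]$, giving $\trdeg(A:R)\le\trdeg(A[f^{-1}]:R)$. Both $A$ and $A[f^{-1}]$ are finitely generated (in particular subfinite and Noetherian) $R$-algebras, so \tref{tMain}\eqref{tMainC} converts this into $\dim(A)\le\dim(A[f^{-1}])$. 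Combined with the reverse inequality from localization, we obtain $\dim(D(f))=\dim(A[f^{-1}])=\dim(A)=\dim(\overline{Y})$, whence $\dim(Y)\ge\dim(D(f))=\dim(\overline{Y})$.

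The hard part is really the non-obvious inequality $\dim(A)\le\dim(A[f^{-1}])$, which would generally fail for an arbitrary Noetherian domain; it is forced here by \tref{tMain}, which translates Krull dimension into the transcendence degree over $R$, an invariant that is visibly monotone under subring inclusions even though localization can in principle drop chains of primes.
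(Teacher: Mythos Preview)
Your proof is correct and follows essentially the same route as the paper: reduce to an affine open, pass to the closure of $Y$ and then to an irreducible component to land in the integral case, pick a basic open $D(f)\subseteq Y$, and use the main theorem to obtain $\dim(A)\le\dim(A[f^{-1}])$. The only cosmetic difference is that the paper isolates this last inequality as a separate corollary (\cref{cAB}: for $A\subseteq B$ subfinite over a Noetherian Jacobson ring, $\dim(A)\le\dim(B)$) and quotes it, whereas you reprove that special case inline via the monotonicity of $\trdeg(-:R)$ and \tref{tMain}\eqref{tMainC}.
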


\begin{proof}
  By part~\eqref{lTopB} of the following \lref{lTop}, we need to show
  that if $\dim(\overline{Y}) \ge n$ for an integer~$n$, then $\dim(Y)
  \ge n$. (We will invoke \lref{lTop}\eqref{lTopB} numerous times
  during this proof without always mentioning it.) By
  \lref{lTop}\eqref{lTopD}, there exists an open affine subset $U$ of
  $X$ such that $\dim(U \cap \overline{Y}) \ge n$. By
  \lref{lTop}\eqref{lTopC}, $U \cap \overline{Y}$ is the closure of $U
  \cap Y$ in $U$. It is also clear that $U \cap Y$ is locally closed
  in $U$. Moreover, if $U = \Spec(A)$, then $A$ is finitely generated
  as an $R$-algebra (see \mycite[Chapter~II, Exercise~3.3(c)]{hart}).
  So by substituting $X$ by $U$ and $Y$ by $U \cap Y$, we may assume
  that $X = \Spec(A)$ with $A$ a finitely generated $R$-algebra.
  
  Since $\overline{Y} = \Spec(B)$ with $B$ a quotient ring of $A$, we
  may substitute $A$ by $B$ and assume that $Y$ is dense in $X$. $X$
  has an irreducible component $X_i$ with $\dim(X_i) \ge n$. Since
  $X_i \cap Y$ is nonempty, $X_i \cap Y$ is dense in $X_i$. So by
  substituting $X$ by $X_i$ and factoring out the nilradical of $A$,
  we may assume $A$ to be an integral domain. Since $Y \subseteq X$ is
  nonempty and open, there exists a nonzero ideal $I \subseteq A$ such
  that
  \[
  Y = \left\{P \in \Spec(A) \mid I \not\subseteq P\right\}.
  \]
  Choose $0 \ne a \in I$. Then
  \[
  D_a := \left\{P \in \Spec(A) \mid a \notin P\right\} \subseteq Y,
  \]
  so it suffices to show that $\dim(D_a) \ge n$. But $D_a$ is
  homeomorphic to $\Spec\left(A\left[a^{-1}\right]\right)$
  (see~[\citenumber{Kemper.Comalg}, Theorem~6.5 and Exercise~6.5]).
  Since $A$ is a Jacobson ring (see \mycite[Theorem~4.19]{eis}),
  \cref{cAB} yields
  \[
  \dim\left(A\left[a^{-1}\right]\right) \ge \dim(A),
  \]
  so $\dim(Y) \ge \dim(D_a) = \dim\left(A\left[a^{-1}\right]\right)
  \ge n$.
\end{proof}

The following lemma was used in the previous proof.

\begin{lemma} \label{lTop}
  Let $X$ be a topological space and $Y \subseteq X$ a subset equipped
  with the subspace topology.
  \begin{enumerate}
  \item \label{lTopA} $Y$ is irreducible if and only if its closure
    $\overline{Y}$ is irreducible.
  \item \label{lTopB} The inequality $\dim(Y) \le \dim(X)$ holds for
    the dimensions of $X$ and $Y$ as topological spaces.
  \item \label{lTopC} If $U \subseteq X$ is an open subset, then $U
    \cap \overline{Y}$ is the closure of $U \cap Y$ in $U$.
  \item \label{lTopD} If $\mathcal{A}$ is a set of open subsets of $X$
    such that $X = \bigcup_{U \in \mathcal{A}} U$ and if $\dim(Y) \ge
    n$ holds for an integer~$n$, then there exists $U \in \mathcal{A}$
    such that $\dim(U \cap Y) \ge n$.
  \end{enumerate}
\end{lemma}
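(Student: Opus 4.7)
The plan is to prove parts (a)--(d) in order, since (d) will rely on the irreducibility transfer established in (a). For (a), I unpack irreducibility on both sides. If $Y$ is irreducible and $\overline{Y} = F_1 \cup F_2$ with each $F_i$ closed in $\overline{Y}$ (hence in $X$), intersecting with $Y$ gives $Y = (Y \cap F_1) \cup (Y \cap F_2)$ as a union of closed subsets of $Y$, forcing $Y \subseteq F_i$ and hence $\overline{Y} \subseteq F_i$ for some $i$. Conversely, if $\overline{Y}$ is irreducible and $Y = C_1 \cup C_2$ with $C_i = Y \cap D_i$ for $D_i$ closed in $X$, then $\overline{Y} \subseteq D_1 \cup D_2$, forcing $\overline{Y} \subseteq D_i$ and hence $Y = C_i$ for one $i$.

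Part (b) follows by lifting a chain $Y_0 \subsetneq Y_1 \subsetneq \cdots \subsetneq Y_n$ of irreducible closed subsets of $Y$ to the chain of closures $\overline{Y_0} \subseteq \overline{Y_1} \subseteq \cdots \subseteq \overline{Y_n}$ in $X$; each $\overline{Y_i}$ is irreducible by (a), and since each $Y_i$ is closed in $Y$ we have $Y_i = \overline{Y_i} \cap Y$, so an equality $\overline{Y_i} = \overline{Y_{i+1}}$ would give $Y_i = Y_{i+1}$, showing that strict inclusions are preserved. For (c), one inclusion is immediate: $U \cap Y \subseteq \overline{Y}$ and $\overline{Y}$ is closed in $X$, so the closure of $U \cap Y$ in $U$ lies in $U \cap \overline{Y}$. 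Conversely, for $x \in U \cap \overline{Y}$ and any open neighborhood $V \subseteq U$ of $x$, $V$ is open in $X$ and hence meets $Y$; since $V \subseteq U$, it meets $U \cap Y$, placing $x$ in the closure of $U \cap Y$ in $U$.

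The main obstacle is part (d). Given a chain $Y_0 \subsetneq \cdots \subsetneq Y_n$ of irreducible closed subsets of $Y$, I pick a point $y \in Y_0$ (irreducible spaces are nonempty) and choose $U \in \mathcal{A}$ with $y \in U$. Each $U \cap Y_i$ is closed in $U \cap Y$ and, as a nonempty open subset of the irreducible space $Y_i$, is itself irreducible and dense in $Y_i$. For strictness, an equality $U \cap Y_i = U \cap Y_{i+1}$ would make the closure of this set in $Y_{i+1}$ equal to $Y_{i+1}$ by density on one hand, while being contained in $Y_i$ (closed in $Y_{i+1}$) on the other, forcing $Y_{i+1} \subseteq Y_i$, a contradiction. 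Hence $U \cap Y_0 \subsetneq \cdots \subsetneq U \cap Y_n$ witnesses $\dim(U \cap Y) \ge n$.
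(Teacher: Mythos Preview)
Your proof is correct and follows essentially the same approach as the paper's. The paper leaves (a) and (b) to the reader, proves (c) via closed sets rather than your open-neighborhood argument (a dual but equivalent route), and for (d) first reduces to $Y = X$ before carrying out the identical chain-intersection argument you give directly; these are cosmetic differences only.
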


\begin{proof}
  The proofs of~\eqref{lTopA} and~\eqref{lTopB} are straightforward
  and left to the reader.
  
  To prove~\eqref{lTopC}, let $Z \subseteq X$ be closed with $U \cap Y
  \subseteq Z$.  Then $\overline{Y} \subseteq \left(\overline{Y}
    \setminus U\right) \cup Z$, so $U \cap \overline{Y} \subseteq U
  \cap Z$. This shows that $U \cap \overline{Y}$ is the smallest
  subset of $X$ that contains $U \cap Y$ and is closed in $U$.
  
  Under the hypothesis of~\eqref{lTopD}, we may assume $Y = X$ since
  $\left\{Y \cap U \mid U \in \mathcal{A}\right\}$ is an open covering
  of $Y$. There exists a chain $X_0 \subsetneqq X_1 \subsetneqq \cdots
  \subsetneqq X_n$ of closed irreducible subsets of $X$. We can choose
  $U \in \mathcal{A}$ with $U \cap X_0 \ne \emptyset$. Then $U_i := U
  \cap X_i \ne \emptyset$, and the $U_i$ form an ascending chain of
  closed subsets in $U$. Since $U_i$ is nonempty and open in $X_i$,
  its closure equals $X_i$. This implies that the inclusions between
  the $U_i$ are strict, and, by~\eqref{lTopA}, that the $U_i$ are
  irreducible.  Therefore $\dim(U) \ge n$.
\end{proof}

\begin{rem*}
  Jacobson rings are characterized by the validity of \tref{tScheme}.
  Indeed, every non-Jacobson ring $R$ has a nonmaximal prime ideal $P$
  such $S := R/P$ contains a nonzero element~$b$ for which $S[b^{-1}]$
  is a field (see \rref{rConverse}\eqref{rConverseA}). Then with $X :=
  \Spec(S)$ und $Y := \{Q \in X \mid b \notin Q\}$ we have $\dim(Y) =
  0$, but $\overline{Y} = X$ and $\dim(X) > 0$.
\end{rem*}

It seems odd that the lexicographic ordering plays such a special role
in Theorems~\ref{tCL} and~\ref{tMain}. Can it be substituted by other
monomial orderings? \tref{tOrdering} below gives answers in some
special cases. We need some preparations for its proof.

Let us call a monomial ordering ``$\preceq$'' \df{weight-graded} if
there exist positive real numbers $w_1, w_2,\ldots$ such that if
$\prod_{i=1}^n x_i^{d_i} \preceq \prod_{i=1}^n x_i^{e_i}$ (with~$n$,
$d_i$, $e_i \in \NN_0$), then $\sum_{i=1}^n w_i d_i \le \sum_{i=1}^n
w_i e_i$.

\begin{lemma} \label{lWeighted}
  Let $a_1 \upto a_n$ be elements of a Noetherian ring $R$ such that
  $\dim(R) < n$ and $\dim\left(R/(a_1 \upto a_n)_R\right) \le 0$. Then
  $a_1 \upto a_n$ are algebraically dependent with respect to every
  weight-graded monomial ordering.
\end{lemma}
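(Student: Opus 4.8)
The plan is to argue by contradiction, confronting two estimates on the length of a descending chain of ideals whose polynomial growth rates differ; this is in the spirit of the proof of \tref{tLower}. First I would dispose of two degenerate cases. If $\mathfrak{q} := (a_1 \upto a_n)_R$ equals $R$, write $1 = \sum_{k=1}^n r_k a_k$ with $r_k \in R$; then $f := 1 - \sum_k r_k x_k$ has trailing monomial $1$ with respect to every monomial ordering, so it is submonic, and $f(a_1 \upto a_n) = 0$. If some $a_k = 0$, then $x_k$ is submonic and vanishes at $(a_1 \upto a_n)$. So assume $\mathfrak{q} \ne R$ and all $a_k \ne 0$; since $\dim(R/\mathfrak{q}) \le 0$ and $R/\mathfrak{q} \ne \{0\}$, the ring $R/\mathfrak{q}$ is Artinian, the primes containing $\mathfrak{q}$ are the finitely many maximal ideals $\mathfrak{m}_1 \upto \mathfrak{m}_s$ over~$\mathfrak{q}$, and each $R/\mathfrak{q}^E$ has finite length with $\length(R/\mathfrak{q}^E) = \sum_t \length\bigl(R_{\mathfrak{m}_t}/(\mathfrak{q}R_{\mathfrak{m}_t})^E\bigr)$; as $\dim(R_{\mathfrak{m}_t}) \le \dim(R) \le n-1$, the Hilbert--Samuel theorem (\mycite[Theorem~17]{Matsumura:80}) yields a constant~$C$ with $\length(R/\mathfrak{q}^E) \le C\,E^{\,n-1}$ for all $E \ge 1$.

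Now fix a weight-graded monomial ordering~``$\preceq$'' with positive weights $w_1 \upto w_n$, put $\underline{w} := \min_k w_k$, $\overline{w} := \max_k w_k$, and write $w(\underline{e}) := \sum_k w_k e_k$ and $a^{\underline{e}} := \prod_k a_k^{e_k}$ for $\underline{e} \in \NN_0^n$. Assume for a contradiction that $a_1 \upto a_n$ are algebraically independent with respect to~``$\preceq$''. Because~``$\preceq$'' is weight-graded, every monomial in $x_1 \upto x_n$ has only finitely many predecessors (these all lie below it in weight), so the monomials enumerate as $1 = t_0 \prec t_1 \prec t_2 \prec \cdots$; write $t_j = x^{\underline{e}_j}$ and $c_j := w(\underline{e}_j)$, so that $c_j \to \infty$ (there are only finitely many monomials of any given weight). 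Put $K_i := \bigl(a^{\underline{e}_j} \bigm| j \ge i\bigr)_R$ for $i \ge 0$, so $K_0 = R$. If some $a^{\underline{e}_i}$ were to lie in $K_{i+1}$, then writing it as an $R$-linear combination of the $a^{\underline{e}_j}$ with $j \ge i+1$ would give $f \in R[x_1 \upto x_n]$ with $f(a_1 \upto a_n) = 0$ whose monomials are $t_i$ (with coefficient~$1$) together with various $t_j \succ t_i$; such an~$f$ would be submonic with respect to~``$\preceq$'', contrary to assumption. Hence $R = K_0 \supsetneq K_1 \supsetneq K_2 \supsetneq \cdots$ is strictly descending.

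The two estimates come from sandwiching $K_i$ between $\mathfrak{q}$ and its powers, which is where weight-gradedness is used. \emph{Upper bound:} if $E\underline{w} > c_{i-1}$, then every generator $a^{\underline{e}}$ of $\mathfrak{q}^E$ (with $\sum_k e_k = E$) has $w(\underline{e}) \ge E\underline{w} > c_{i-1} = w(\underline{e}_{i-1})$, so $x^{\underline{e}} \succ t_{i-1}$ and therefore $a^{\underline{e}} \in K_i$; thus $\mathfrak{q}^{E_i} \subseteq K_i$ with $E_i := \lfloor c_{i-1}/\underline{w}\rfloor + 1$, so $\length(R/K_i) \le \length(R/\mathfrak{q}^{E_i}) \le C E_i^{\,n-1}$ is finite, and the strict chain then forces $\length(R/K_i) \ge i$. \emph{Lower bound:} every monomial $x^{\underline{e}}$ with $w(\underline{e}) < c_{i-1}$ satisfies $x^{\underline{e}} \prec t_{i-1}$, so there are at most $i-1$ of them; but the number of $\underline{e} \in \NN_0^n$ with $w(\underline{e}) < c_{i-1}$ is at least $\bigl|\{\underline{e} \in \NN_0^n \mid \textstyle\sum_k e_k \le c_{i-1}/(2\overline{w})\}\bigr| = \binom{\lfloor c_{i-1}/(2\overline{w})\rfloor + n}{n}$, which exceeds $c'\,c_{i-1}^{\,n}$ for a fixed $c' > 0$ and all large~$c_{i-1}$. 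Combining, $c'\,c_{i-1}^{\,n} < i \le \length(R/K_i) \le C E_i^{\,n-1} \le C(c_{i-1}/\underline{w} + 1)^{\,n-1}$ for all large~$i$, which bounds~$c_{i-1}$ and contradicts $c_{i-1} \to \infty$.

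The one delicate point is arranging that the two growth rates genuinely differ: the lower bound counts lattice points in a simplex of size proportional to~$c_{i-1}$, hence is of order $c_{i-1}^{\,n}$, while the upper bound is a Hilbert--Samuel estimate controlled by $\dim(R) \le n-1$, hence of order $c_{i-1}^{\,n-1}$. The hypothesis $\dim(R) < n$ is precisely what separates these degrees, and $\dim(R/\mathfrak{q}) \le 0$ is what makes the modules $R/\mathfrak{q}^E$, and hence $R/K_i$, of finite length so that Hilbert--Samuel theory applies at all.
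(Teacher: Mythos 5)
Your proof is correct. It rests on the same double counting as the paper's argument (and as the proof of \tref{tLower}): algebraic independence turns the monomials in $a_1 \upto a_n$ into a long strictly descending chain of ideals, whose length is bounded below by a lattice-point count of order $j^n$ and above by a Hilbert--Samuel estimate of order $j^{\dim(R)} \le j^{n-1}$. The organization, however, differs in two genuine ways. First, the paper begins by reducing to the local case: it observes that the trailing coefficients of vanishing polynomials form an ideal $J \supseteq I$ and verifies $J \not\subseteq \mathfrak{m}$ at every maximal ideal over $I$, so the counting is done in $R_{\mathfrak{m}}$ with \mycite[Theorem~17]{Matsumura:80} applied directly. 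You skip this gluing step entirely: since $\dim(R/\mathfrak{q}) \le 0$ makes $R/\mathfrak{q}^E$ Artinian, you get a global bound $\length(R/\mathfrak{q}^E) \le C E^{n-1}$ by summing the local Hilbert--Samuel functions over the finitely many maximal ideals containing $\mathfrak{q}$ -- a small but real simplification. Second, the paper filters by weight, setting $I_j$ to be generated by the monomials of weight $\ge j$ and extracting $\length(I_j/I_{j+1}) \ge |A_j|$ from sub-chains indexed by the monomials of weight in $[j,j+1)$; you instead enumerate the monomials of $R[x_1 \upto x_n]$ in increasing order (possible exactly because the ordering is weight-graded with positive weights, so each monomial has finitely many predecessors) and work with the single chain $K_0 \supsetneq K_1 \supsetneq \cdots$, translating between the enumeration index $i$ and the weight $c_{i-1}$ via $\underline{w}$ and $\overline{w}$. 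Your sandwich $\mathfrak{q}^{E_i} \subseteq K_i$ and the inequality chain $c'c_{i-1}^n < i \le \length(R/K_i) \le C(c_{i-1}/\underline{w}+1)^{n-1}$ are sound, and the preliminary disposal of the cases $\mathfrak{q} = R$ and $a_k = 0$ is harmless (the latter is not even needed). So: same underlying mechanism, but a cleaner global formulation that trades the paper's trailing-coefficient ideal $J$ and localization for the Artinian decomposition of $R/\mathfrak{q}^E$.
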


\begin{proof}
  Let ``$\preceq$'' be a weight-graded monomial ordering. Consider the
  set $J \subseteq R$ of all trailing monomials of polynomials $f \in
  R[x_1 \upto x_n]$ with $f(a_1 \upto a_n) = 0$. It is easy to see
  that $J$ is an ideal and $I := (a_1 \upto a_n)_R \subseteq J$. We
  need to show that $J = R$. Suppose that we can show that for all
  maximal ideals $\mathfrak{m} \in \Spec(R)$ with $I \subseteq
  \mathfrak{m}$, the elements $\frac{a_i}{1} \in R_{\mathfrak{m}}$ are
  algebraically dependent with respect to ``$\preceq$''. Then $J \cap
  (R \setminus \mathfrak{m})$ is nonempty, so $J = R$. This shows that
  we can assume that $R$ is a local ring and $I$ is contained in its
  maximal ideal.
  
  By hypothesis, $R/I$ is Artinian, so by
  \mycite[Theorem~17]{Matsumura:80}, the length of $R/I^j$ is given by
  a polynomial of degree $\dim(R) < n$ for~$j$ large enough.  Since
  multiplying all weights $w_i$ by the same positive constant does not
  change the monomial ordering, we may assume that $w_i \ge 1$ for $i
  \in \{1 \upto n\}$. For $(\underline{e}) = (e_1 \upto e_n) \in
  \NN_0^n$ write $w(\underline{e}) := \sum_{i=1}^n w_i e_i$, and for
  $j \in \NN_0$ set
  \[
  I_j := \left(\left.\strut\smash{\prod_{i=1}^n} a_i^{e_i} \right|
    (\underline{e}) \in \NN_0^n,\ w(\underline{e}) \ge j\right)_R.
  \]
  Then $I^j \subseteq I_j$, so the length of $R/I_j$ is bounded above
  by a polynomial of degree $< n$. Take $j \in \NN_0$ and consider the
  set
  \[
  A_j := \left\{(\underline{d}) \in \NN_0^n \mid j \le
    w(\underline{d}) < j + 1\right\}.
  \]
  By way of contradiction, assume that the $a_i$ are algebraically
  independent with respect to ``$\preceq$''. For $(\underline{d}) \in
  A_j$, this assumption and the fact that ``$\preceq$'' is
  weight-graded imply
  \[
  I_{j+1} \subseteq \left(\left.\strut\smash{\prod_{i=1}^n} a_i^{e_i}
    \right| \smash{\prod_{i=1}^n} x_i^{e_i} \succ
    \smash{\prod_{i=1}^n} x_i^{d_i}\right)_R \subsetneqq
  \left(\left.\strut\smash{\prod_{i=1}^n} a_i^{e_i} \right|
    \smash{\prod_{i=1}^n} x_i^{e_i} \succeq \smash{\prod_{i=1}^n}
    x_i^{d_i}\right)_R \subseteq I_j.
  \]
  So ordering $A_j$ according to ``$\preceq$'' yields a chain of
  length $|A_j|$ of ideals between $I_{j+1}$ and $I_j$. This implies
  $\length\left(I_j/I_{j+1}\right) \ge |A_j|$, so
  $\length\left(R/I_j\right) \ge \left|\left\{(\underline{d}) \in
      \NN_0^n \mid w(\underline{d}) < j\right\}\right|$. With
  $\overline{w} := \max\{w_1 \upto w_n\}$, we obtain
  \[
  \length\left(R/I_j\right) \ge \left|\left\{(\underline{d}) \in
      \NN_0^n \mid \overline{w}(d_1 + \cdots + d_n) < j\right\}\right|
  = \binom{\lceil j/\overline{w} \rceil - 1 + n}{n},
  \]
  contradicting the fact that the length of $R/I_j$ is bounded above
  by a polynomial of degree $< n$. This contradiction finishes the
  proof.
\end{proof}

\begin{theorem} \label{tOrdering}
  Let $A$ be a Noetherian ring and let ``$\preceq$'' be a monomial
  ordering.
  \begin{enumerate}
  \item \label{tOrderingA} If $A$ is an algebra over a ring $R$ that
    contains a field $K$ such that $A$ is a subfinite $K$-algebra,
    then
    \[
    \trdeg_\preceq(A:R) = \dim(A).
    \]
  \item \label{tOrderingB} If $A = B_P$ with $B$ a finitely generated
    algebra over a field and $P \in \Spec(B)$, then
    \[
    \trdeg_\preceq(A) = \dim(A).
    \]
  \item \label{tOrderingC} If $\dim(A) \le 1$, then
    \[
    \trdeg_\preceq(A) = \dim(A).
    \]
  \item \label{tOrderingD} If $\dim(A) \le 1$ and $A$ is an subfinite
    algebra over a Noetherian Jacobson ring $R$, then
    \[
    \trdeg_\preceq(A:R) = \dim(A).
    \]
  \end{enumerate}
\end{theorem}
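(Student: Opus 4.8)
The plan is to prove the four parts in a way that mirrors the proof of \tref{tMain}, but with the lexicographic ordering replaced by an arbitrary monomial ordering ``$\preceq$'', using \lref{lWeighted} as the key new input wherever the special structure of $\lex$ was previously exploited. The inequality $\dim(A) \le \trdeg_\preceq(A:R)$ in each part follows exactly as before: in the cases involving a ground ring $R$ it comes from $\dim(A) = \trdeg_{\lex}(A) \le \trdeg_{\preceq}(A)$... wait, that last step is not automatic for a general ``$\preceq$''. So more carefully: by \eqref{eqChain} and \eqref{eqLower} we have $\dim(A) \le \trdeg(A) \le \trdeg_\preceq(A) \le \trdeg_\preceq(A:R)$ whenever $A$ is Noetherian, which it is throughout. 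Hence in all four parts only the upper bound $\trdeg_\preceq(A:R) \le \dim(A)$ (respectively $\trdeg_\preceq(A) \le \dim(A)$) needs proof.

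For the upper bound I would run the same induction on $n := \dim(A)+1$ as in the proof of \pref{pUpper}, localizing at the multiplicative set $U := \{f(a_n) \mid f \in R[x] \text{ submonic w.r.t.\ } \preceq\}$. The new ingredient is that, having produced via the induction hypothesis a polynomial $\widehat f \in R[a_n][x_1 \upto x_{n-1}]$ with trailing coefficient $c_t \in U$ (submonic w.r.t.\ ``$\preceq$'' restricted to $x_1 \upto x_{n-1}$), one must be able to lift $c_t$ to a submonic $c_t' \in R[x_n]$ with respect to ``$\preceq$'' and argue that the assembled $f \in R[x_1 \upto x_n]$ is still submonic. For a general monomial ordering this lifting step is exactly where \lref{lWeighted} enters: since $c_t \in U$ means $0 \in (c_t)$ after a further localization, or more directly since the $a_i$ lie in a low-dimensional ring after inverting $a_n$, one applies \lref{lWeighted} to get a submonic relation. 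Concretely, part~\eqref{tOrderingC} is the cleanest: when $\dim(A) \le 1$, given $a_1, a_2 \in A$ the ideal $(a_1,a_2)_A$ has $\dim(A/(a_1,a_2)_A) \le \dim(A) = 1$; if this dimension is $\le 0$, \lref{lWeighted} applies to a \emph{weight-graded} ``$\preceq$'', but for an \emph{arbitrary} ``$\preceq$'' one instead reduces to the case where $(a_1,a_2)_A$ is contained in the relevant primes and argues locally, again via the length estimate of \lref{lWeighted}'s proof — so part~\eqref{tOrderingC} should be re-derived by a direct length computation rather than by citing \lref{lWeighted} verbatim. Parts~\eqref{tOrderingA} and~\eqref{tOrderingB} are handled by reducing, exactly as in the proof of \tref{tMain}\eqref{tMainB}, to the case that $A$ is a finitely generated algebra over a field $K$ (after inverting a suitable element and passing to an irreducible component), where one then has a genuine Noether normalization $K[y_1 \upto y_d] \hookrightarrow A$ and the $y_i$ are algebraically independent over $K$ hence over $R$; algebraic dependence of any $a_1 \upto a_{d+1}$ w.r.t.\ ``$\preceq$'' follows because the $a_i$ are algebraic over $K(y_1 \upto y_d)$ and one clears denominators to get a polynomial over $K$ that one can normalize — but making this submonic for a \emph{general} ordering again needs \lref{lWeighted} applied inside the local ring $A_{\mathfrak m}$ at maximal ideals $\mathfrak m \supseteq (a_1 \upto a_{d+1})$, which is legitimate because $\dim(A_{\mathfrak m}) \le d < d+1$ and $A_{\mathfrak m}/(a_1 \upto a_{d+1})$ is Artinian. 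Part~\eqref{tOrderingD} then follows by combining~\eqref{tOrderingC}-style reasoning with the reduction-to-domain and $A[a^{-1}]$ trick from the proof of \tref{tMain}\eqref{tMainB}, using that $R$ is Jacobson via \lref{lField} and \lref{lBoundary} exactly where they were used before.

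The main obstacle will be the lifting step in the induction: verifying that when one re-inserts the variable $x_n$ into the coefficients of the inductively obtained relation, the resulting polynomial $f \in R[x_1 \upto x_n]$ has trailing coefficient $1$ with respect to ``$\preceq$'' — this is automatic for $\lex$ because of its block structure separating $x_n$ from $x_1 \upto x_{n-1}$, but for a general ordering the trailing monomial of $f$ need not factor as (trailing monomial in $x_1 \upto x_{n-1}$)$\,\cdot\,$(trailing monomial in $x_n$). This is precisely why \tref{tOrdering} can only be proved under the extra hypotheses (presence of a ground field making things finitely generated over a field, or $\dim \le 1$): in those situations one can invoke \lref{lWeighted}, or its local length argument, to produce the submonic relation directly without needing the induction's lifting step to respect the ordering. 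I expect the bulk of the write-up to consist of carefully checking that \lref{lWeighted}'s hypotheses ($\dim(R) < n$ and $\dim(R/(a_1 \upto a_n)_R) \le 0$) are met in each of the four settings after the appropriate localizations and reductions, and that ``$\preceq$'' being weight-graded is not actually needed — i.e.\ re-examining \lref{lWeighted}'s proof to see that the length estimate $\length(R/I_j) \ge \binom{\lceil j/\overline w\rceil - 1 + n}{n}$ goes through for any monomial ordering once one defines $I_j$ using a chosen weight vector $w$ from \mycite[Exercise~9.2(b)]{Kemper.Comalg}, which exists for every monomial ordering. If that re-examination succeeds, \lref{lWeighted} holds for all monomial orderings and parts~\eqref{tOrderingC},~\eqref{tOrderingD} become immediate; parts~\eqref{tOrderingA},~\eqref{tOrderingB} still need the reduction-to-a-field machinery but then go through as sketched.
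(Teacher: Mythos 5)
Your plan hinges on a hoped-for strengthening of \lref{lWeighted} to arbitrary monomial orderings, obtained by defining the filtration $I_j$ via a weight vector taken from [\citenumber{Kemper.Comalg}, Exercise~9.2(b)]. This is the genuine gap: that exercise only separates the \emph{finitely many} monomials occurring in one given relation, whereas the proof of \lref{lWeighted} needs the weight function to be compatible with ``$\preceq$'' on \emph{all} monomials. Concretely, the two outer inclusions in the displayed chain of that proof require that every monomial of $w$-weight at least $j+1$ be $\succ$-greater than every monomial of $w$-weight in $[j,j+1)$ --- which is exactly weight-gradedness of ``$\preceq$'' with respect to $w$; and inside \lref{lWeighted} there is no relation whose monomials one could separate, since algebraic independence is assumed for the sake of contradiction. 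Already for the lexicographic ordering in two variables no positive weight vector has this global compatibility, so the chain of ideals of length $|A_j|$ between $I_{j+1}$ and $I_j$ is not available and the length estimate collapses. Note also that if weight-gradedness (or the hypothesis $\dim(R/(a_1 \upto a_n)_R) \le 0$) could be dropped, your scheme would essentially settle Conjecture~\ref{Conjecture}, which the paper explicitly leaves open --- a warning sign that no routine re-examination will do it.

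Because of this, the route you sketch does not close, and it bypasses what the actual proof uses. For~\eqref{tOrderingA} the point is simply that over a field submonicity is ordering-free (divide a nonzero relation by its trailing coefficient), so \tref{tMain}\eqref{tMainC} applied with ground ring $K$ gives dependence with respect to every ``$\preceq$'', hence over $R \supseteq K$; no Noether normalization and no local application of \lref{lWeighted} is needed (nor available, since ``$\preceq$'' need not be weight-graded). For~\eqref{tOrderingB}, $A = B_P$ is in general \emph{not} subfinite over the base field, so one needs \mycite{Tanimoto:03}: each $A/P_i$ ($P_i$ a minimal prime) contains a field $K_i$ with $\trdeg(A/P_i:K_i)=\dim(A/P_i)$, after which the same field trick applies and one multiplies the resulting relations and takes a power; your proposal never produces such a coefficient field. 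For~\eqref{tOrderingC} the decisive input is \mycite{Robbiano:85}: restricted to two variables, every monomial ordering is either lexicographic or weight-graded, so \tref{tCL} and \lref{lWeighted} together are exhaustive; moreover \lref{lWeighted} needs the quotient to have dimension $\le 0$, which is why the paper first passes to $A/I$ with $I$ the intersection of the minimal primes not containing $a_1$ --- your ``direct length computation'' has neither the dichotomy nor this reduction. Finally, \eqref{tOrderingD} follows from~\eqref{tOrderingC} via \cref{cAB} applied to the subalgebra generated by $a_1 \upto a_n$, rewriting the coefficients over $R$ with the trailing coefficient replaced by~$1$; the $A[a^{-1}]$, \lref{lField}, \lref{lBoundary} machinery you invoke was tailored to $\lex$ and to the lifting step of \pref{pUpper}, which, as you yourself observe, fails for a general ordering and is nowhere repaired in your write-up.
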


\begin{proof}
  By \dref{dTrdeg} we may assume $A \ne \{0\}$. By~\eqref{eqLower} and
  \dref{dTrdeg} we have
  \[
  \dim(A) \le \trdeg(A) \le \trdeg_\preceq(A) \le \trdeg_\preceq(A:R),
  \]
  where we set $R = A$ in the case of~\eqref{tOrderingB}
  and~\eqref{tOrderingC}. So we may assume $n := \dim(A) + 1 < \infty$
  and need to show that all $a_1 \upto a_n \in A$ are algebraically
  dependent over $R$ with respect to ``$\preceq$''.
  \begin{enumerate}
  \item[\eqref{tOrderingA}] By \tref{tMain}\eqref{tMainC}, the $a_i$
    are algebraically dependent over $K$. Since $K$ is a field, the
    algebraic dependence is with respect to~``$\preceq$'', and since
    $K \subseteq R$, it is over $R$.
  \item[\eqref{tOrderingB}] Let $P_1 \upto P_r \in \Spec(A)$ be the
    minimal prime ideals in $A$. For each~$i$, $A/P_i$ is the
    localization of a finitely generated domain over a field at a
    prime ideal, so by \mycite{Tanimoto:03}, $A/P_i$ contains a field
    $K_i$ such that $\trdeg\left(A/P_i:K_i\right) =
    \dim\left(A/P_i\right)$. Therefore
    \[
    \trdeg_\preceq\left(A/P_i\right) \le
    \trdeg_\preceq\left(A/P_i:K_i\right) =
    \trdeg\left(A/P_i:K_i\right) = \dim\left(A/P_i\right) \le \dim(A).
    \]
    So we obtain polynomials $f_i \in A[x_1 \upto x_n]$ that are
    submonic with respect to ``$\preceq$'' such that $f_i(a_1 \upto
    a_n) \in P_i$. A power of the product of the $f_i$ yields a
    submonic equation for $a_1 \upto a_n$.
  \item[\eqref{tOrderingC}] If $\dim(A) = 0$, the algebraic dependence
    of the $a_i$ follows from \tref{tCL} since every monomial ordering
    restricts to the lexicographic ordering on $A[x_1]$. So we may
    assume $\dim(A) = 1$. It follows by \mycite{Robbiano:85} that the
    restriction of ``$\preceq$'' to $A[x_1,x_2]$ is either a
    lexicographic ordering or weight-graded.  By \tref{tCL} we may
    assume the latter.
  
    Let $P_1 \upto P_r$ be the minimal prime ideals of $A$ satisfying
    $a_1 \notin P_i$ and set $I := \bigcap_{i=1}^r P_i$ (with $I := R$
    in the case $r = 0$). Every $P \in \Spec(A)$ with $I \subseteq P$
    contains at least one of the $P_i$, and if $a_1 \in P$, then $P_i
    \subsetneqq P$, so $\dim(A/P) = 0$. This shows that
    $\dim\left(A/(I + A a_1)\right) \le 0$. Applying \lref{lWeighted}
    to $R := A/I$ yields $f \in A[x_1,x_2]$ that is submonic with
    respect to ``$\preceq$'' such that $f(a_1,a_2) \in I$. By
    multiplying~$f$ by~$x_1$, we may assume $f(a_1,a_2) \in A a_1$, so
    by the definition of $I$, $f(a_1,a_2)$ lies in every minimal prime
    ideal of $A$.  Therefore a suitable power of~$f$ yields a submonic
    equation for $a_1,a_2$.
  \item[\eqref{tOrderingD}] By \cref{cAB}, the subalgebra $A'
    \subseteq A$ generated by the $a_i$ has dimension at most~$1$.
    By~\eqref{tOrderingC}, the $a_i$ satisfy an equation $f \in A'[x_1
    \upto x_n]$ that is submonic with respect to~``$\preceq$''. Obtain
    $\widehat{f} \in R[x_1 \upto x_n]$ from~$f$ by replacing every
    coefficient $c \in A'$ of~$f$ be a $\widehat{c} \in R[x_1 \upto
    x_n]$ with $\widehat{c}(a_1 \upto a_n) = c$, where the trailing
    coefficient of~$f$ is replaced by $1 \in R$. It follows
    that~$\widehat{f}$ is submonic with respect to~``$\preceq$'' and
    $\widehat{f}(a_1 \upto a_n) = 0$. \qed
  \end{enumerate}
  \renewcommand{\qed}{}
\end{proof}

In view of \tref{tOrdering}, a candidate that comes to mind for a ring
$R$ such that $\trdeg_\preceq(R) > \dim(R)$ for some monomial
ordering~``$\preceq$'' is the polynomial ring $\ZZ[x]$. Using a short
program written in MAGMA~[\citenumber{magma}], I tested millions of
randomly selected triples of polynomials from $\ZZ[x]$ and verified
that they were all algebraically dependent with respect to the graded
reverse lexicographic ordering, even over the subring $\ZZ$. This
prompts the following conjecture:

\begin{conjecture} \label{Conjecture}
  \tref{tMain}\eqref{tMainA} and~\eqref{tMainC} holds with
  ``$\trdeg$'' replaced by ``$\trdeg_\preceq$'', with~``$\preceq$'' an
  arbitrary monomial ordering.
\end{conjecture}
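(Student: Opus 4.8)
In both parts of the conjecture the inequality $\dim(A) \le \trdeg_\preceq(A:R)$ is already known --- it follows from \eqref{eqLower}, \eqref{eqChain} and \dref{dTrdeg}, reading ``$A$'' as ``$R$'' in part~\eqref{tMainA} --- so only $\trdeg_\preceq(A:R) \le \dim(A)$ remains to be proved. The plan is to rerun, with~$\lex$ replaced by the given ordering~``$\preceq$'', the reductions made in the proof of \tref{tMain}\eqref{tMainB} and~\eqref{tMainC} (which use nothing about $\lex$ beyond the fact that products of submonic polynomials are submonic), thereby reducing part~\eqref{tMainC} to the case of a finitely generated algebra, and then the induction on $n := \dim(A)+1$ that proves \pref{pUpper}. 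Fix $a_1 \upto a_n \in A$, form the multiplicative set $U := \{f(a_n) \mid f \in R[x]\text{ submonic}\} \subseteq A$, and put $A' := U^{-1}A$. Because the one-variable polynomial ring $R[x]$ carries a unique monomial ordering, $U$ does not depend on~``$\preceq$'': it equals $\{a_n^{\,m}(1+a_n s) \mid m \in \NN_0,\ s \in R[a_n]\}$, the boundary multiplicative set of \lref{lBoundary} for the ring $R[a_n]$.

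The first thing to check is that the proof of \pref{pUpper} carries over verbatim as far as the construction of~$\tilde f$. This is the case: $1-xg$ is submonic with respect to every monomial ordering, so the argument producing $\dim(A') < n-1$ (via \lref{lField} when $A \ne R$) is untouched; if $A' = \{0\}$ then $0 \in U$ and $a_n$ already satisfies a submonic equation; and otherwise \lref{lBoundary} makes $R' := U^{-1}R[a_n]$ a Noetherian Jacobson ring over which $A'$ is finitely generated, so the induction hypothesis yields $\tilde f \in R'[x_1 \upto x_{n-1}]$, submonic with respect to~``$\preceq$'', with $\tilde f(a_1 \upto a_{n-1}) = 0$.

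The step that does \emph{not} generalize is the lifting of~$\tilde f$ to a relation in $R[x_1 \upto x_n]$ that is submonic with respect to~``$\preceq$''. Clearing denominators produces $\widehat f = \sum_\alpha c_\alpha x^\alpha \in R[a_n][x_1 \upto x_{n-1}]$ with $\widehat f(a_1 \upto a_{n-1}) = 0$ whose trailing coefficient $c_t = c_{\alpha_0}$ (with respect to the restriction of~``$\preceq$'' to $x_1 \upto x_{n-1}$) lies in~$U$, hence $c_t = a_n^{\,m}(1 + a_n s)$ with $m \in \NN_0$ and $s \in R[a_n]$. Replacing each coefficient $c \in R[a_n]$ by a preimage $c' \in R[x_n]$ gives $f = \sum_\alpha c'_\alpha(x_n)\,x^\alpha \in R[x_1 \upto x_n]$ with $f(a_1 \upto a_n) = 0$, and one wants the~$c'_\alpha$ chosen so that~$f$ is submonic. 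When $m = 0$ this succeeds: take $c'_{\alpha_0} := 1 + x_n s'$ with $s'(a_n) = s$; then $x^{\alpha_0}$ occurs in~$f$ with coefficient~$1$, and, since every monomial of~$f$ is of the form $x^\alpha x_n^k$ with $x^\alpha x_n^k \succeq x^\alpha \succeq x^{\alpha_0}$, it is the trailing monomial. For~$\lex$ the case $m \ge 1$ is just as harmless, because $\lex$ compares $x_1 \upto x_{n-1}$ before~$x_n$, so the trailing monomial of~$f$ is $x^{\alpha_0}x_n^{\,m}$ with coefficient~$1$. But for a weight-graded ordering the monomial $x^{\alpha_0}x_n^{\,m}$ can be beaten by some $x^\alpha x_n^k$ with $k < m$ (e.g.\ when $x^\alpha$ and $x^{\alpha_0}$ have equal degree), and then the trailing coefficient of~$f$ need not be~$1$. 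The factor $a_n^{\,m}$ with $m \ge 1$ cannot be avoided, since $a_n$ is a unit in $R'$ and the denominators cleared in passing from~$\tilde f$ to~$\widehat f$ genuinely involve $a_n^{-1}$; localizing only at the elements $1 + a_n s$ would remove it but destroy the dimension drop $\dim(A') < n-1$. Resolving this lifting step is the main obstacle.

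A plausible way to organize an attack is to classify~``$\preceq$'' by Robbiano's theorem (\mycite{Robbiano:85}, already used in the proof of \tref{tOrdering}), which presents a monomial ordering on $x_1 \upto x_n$ as a finite list of weight vectors refined lexicographically, and to induct on the length of this list: the base case (coordinate permutations) is \tref{tCL}, and the inductive step would handle the leading weight by the length-counting argument of \lref{lWeighted}. The obstruction is that \lref{lWeighted} needs $\dim\big(R/(a_1 \upto a_n)_R\big) \le 0$, which fails in general and must be manufactured --- as is done for $\dim(A) \le 1$ in the proof of \tref{tOrdering}\eqref{tOrderingC} by passing to a suitable quotient, but I do not see how to achieve it in higher dimension without running into the same lifting problem. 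The cases $\dim(A) \le 1$ and weight-graded orderings already settled in \tref{tOrdering} and \lref{lWeighted}, together with the reported MAGMA experiments on $\ZZ[x]$ with the graded reverse lexicographic ordering, are the evidence that the conjecture should hold.
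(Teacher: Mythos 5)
You have not given a proof, and none can be graded here: the statement is exactly the open Conjecture of the paper, which the author explicitly leaves unresolved (``So far, all efforts to prove the conjecture have been futile''), supporting it only with the special cases of \tref{tOrdering} and the MAGMA experiments on $\ZZ[x]$. Your own text concedes the same: after rerunning the reductions and the induction of \pref{pUpper}, you stop at the lifting step and call it ``the main obstacle''. That unresolved step is a genuine gap, and it is precisely the gap that makes the statement a conjecture rather than a theorem.

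That said, your situation report is accurate and agrees with the paper's own understanding of the problem. The reductions in the proof of \tref{tMain}\eqref{tMainB} and~\eqref{tMainC} (passing to minimal primes of $B$, multiplying submonic polynomials, inverting a single element of $A$) indeed use nothing specific to $\lex$, the set $U$ is independent of the ordering, and the only lex-specific step in \pref{pUpper} is the one you isolate: after clearing denominators the trailing coefficient is $a_n^m(1+a_n s) \in U$, and for $\lex$ one may lift it to a submonic $c_t' \in R[x_n]$ because $\lex$ compares $x_1 \upto x_{n-1}$ before $x_n$, so the trailing monomial of the lifted $f$ is $x^{\alpha_0}$ times the trailing monomial of $c_t'$; for a general ordering a monomial $x^\alpha x_n^k$ with $k < m$ can undercut $x^{\alpha_0} x_n^m$, and the factor $a_n^m$ cannot be discarded, since inverting only the elements $1 + a_n s$ would no longer force $\dim(A') < n-1$ (take $a_n$ in the maximal ideal of a local ring). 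Your proposed attack via Robbiano's classification and \lref{lWeighted} is also the route the paper itself points to: the author remarks that the conjecture would follow if the hypothesis $\dim\left(R/(a_1 \upto a_n)_R\right) \le 0$ could be removed from \lref{lWeighted}, and that removal (equivalently, your lifting step for non-lex, non-weight-graded orderings in dimension $\ge 2$) is exactly what remains open. So: a correct diagnosis and a reasonable plan of attack, but not a proof.
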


So far, all efforts to prove the conjecture have been futile. Let me
mention that it would follow if one could get rid of the hypothesis
``$\dim\left(R/(a_1 \upto a_n)_R\right) \le 0$'' in \lref{lWeighted}.

\end{document}